\documentclass[graybox]{svmult}

\usepackage{mathptmx}       
\usepackage{helvet}         
\usepackage{courier}        
\usepackage{type1cm}        
%
\usepackage{makeidx}         
\usepackage{graphicx}        
\usepackage{multicol}        
\usepackage[bottom]{footmisc}


\usepackage{amssymb}
\usepackage{graphicx}
\usepackage[T1]{fontenc}
\usepackage{hyperref}
\usepackage{multirow}
\usepackage{array}
\usepackage{stmaryrd}
\usepackage{breqn}






\newtheorem{observation}[theorem]{Observation}

\makeindex

\begin{document}

\title*{On higher congruences between cusp forms and Eisenstein series}
\author{Bartosz Naskr\k{e}cki}
\institute{Bartosz Naskr\k{e}cki \at Graduate School, Faculty of
Mathematics and Computer Science, Adam Mickiewicz University, Pozna\'{n}, Poland,
\email{bartnas@amu.edu.pl}}

\maketitle





\abstract{The paper contains a numerical study of congruences modulo prime powers between newforms and Eisenstein series at prime levels and with equal weights. We study the upper bound on the exponent of the congruence and formulate several observations based on the results of our computations.}

\section{Introduction}
\label{sec:1}
Let $p$ be a rational prime. For a newform $f\in\mathcal{S}_{k}(\Gamma_{0}(p))$, let $K_{f}=\mathbb{Q}(\{a_{n}(f)\}_{n\geq 0})$ be the field generated by the Fourier coefficients of the form $f$ and let $\mathcal{O}_{f}$ be its ring of integers.
Let $E_{k}$ denote the Eisenstein series of weight $k$ given by the $q$-expansion $-\frac{B_{k}}{2k}+\sum_{n=1}^{\infty}\left(\sum_{d\mid n}d^{k-1}\right)q^{n}$, where $B_{k}$ is the $k$-th Bernoulli number. We define the series $E_{k}^{(p)}$ by $E_{k}^{(p)}(\tau)=E_{k}(p\tau)$. From the theorem of Mazur \cite[Proposition 5.12, Proposition 9.6]{Mazur} we know that for $k=2$ and for any fixed prime $p\geq 11$ if we choose any prime $\ell\neq 2,3$ dividing the numerator of the zeroth coefficient of the Eisenstein series $E_{2}-pE_{2}^{(p)}$ of weight $2$, then there exists a newform $f$ in $\mathcal{S}_{2}(\Gamma_{0}(p))$ and a maximal ideal $\lambda\in\mathcal{O}_{f}$ above $\ell$ such that
\begin{equation}\label{Mazur_eq}
a_{r}(f)\equiv a_{r}(E_{2}-pE_{2}^{(p)})\textrm{ mod }\lambda
\end{equation}
for almost all primes $r$.

\medskip\noindent 
We study a generalization of the congruence (\ref{Mazur_eq}). Choose $E=E_{k}-p^{k-1}E_{k}^{(p)}$. Assume there exists a newform $f\in\mathcal{S}_{k}(\Gamma_{0}(p))$, a natural number $r\geq 1$ and a maximal ideal $\lambda\in\mathcal{O}_{f}$, such that
\begin{equation}\label{congruence}
a_{n}(E)\equiv a_{n}(f)\textrm{ mod }\lambda^{r}
\end{equation}
for all $n\geq 0$. Let $\ell$ be the rational prime below $\lambda$ and assume that $p\notin\lambda$. Then $\ell$ divides the numerator of $a_{0}(E)$. More precisely, \[r\leq ord_{\lambda}(\ell)v_{\ell}(\frac{-B_{k}}{2k}(1-p)),\]
where $ord_{\lambda}$ is the standard normalized $\lambda$-adic valuation on $K_{f}$ and $v_{\ell}$ is the $\ell$-adic valuation on $\mathbb{Q}$.
This is proved in Corollary \ref{corollary:bounds} and Lemma \ref{lemma:bound}. In the proof, we use the explicit description of $a_{p}(f)$ for a newform $f\in\mathcal{S}_{k}(\Gamma_{0}(p))$, cf. \cite[Theorem 3]{Atkin}. In general, the maximal exponent $r$ of the congruence (\ref{congruence}) can be equal to 
\[m:=e\cdot v_{\ell}(\frac{-B_{k}}{2k}(1-p)),\]
where $e=ord_{\lambda}(\ell)$. From now on $r$ always denotes the maximal exponent of the congruence.
\begin{proposition}
There exists a prime $p$, a positive even integer $k$ and a newform $f\in\mathcal{S}_{k}(\Gamma_{0}(p))$ such that the congruence (\ref{congruence}) holds for all $n\geq 0$ and for some $r=m>1$.
\end{proposition}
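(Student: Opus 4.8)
The plan is to prove this existence statement by exhibiting an explicit example, which is the natural route given the numerical character of the paper. The logical skeleton is short: the upper bound $r \le m$ is already available from Corollary~\ref{corollary:bounds} and Lemma~\ref{lemma:bound}, so it suffices to produce a single triple $(p,k,f)$ together with a prime $\ell$ and a maximal ideal $\lambda \mid \ell$ for which (i) $m = e\cdot v_\ell(a_0(E)) > 1$, and (ii) the congruence $a_n(E)\equiv a_n(f)\pmod{\lambda^{m}}$ holds for all $n\ge 0$. Combining (ii), which gives $r\ge m$, with the upper bound $r\le m$ forces $r=m$, and (i) guarantees $r=m>1$.

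\emph{Finding candidates.} Since $m = e\cdot v_\ell(a_0(E))$ with $e = \mathrm{ord}_\lambda(\ell)$ the ramification index, $m>1$ can be arranged in two ways: by taking $v_\ell(a_0(E))\ge 2$, or by forcing ramification $e\ge 2$ in $K_f$ above $\ell$. The cleanest is the first. I would fix a small weight, say $k=2$, where $a_0(E)=\frac{-B_2}{4}(1-p)=\frac{p-1}{24}$, and search for primes $p$ whose value $\frac{p-1}{24}$ has a repeated prime factor $\ell^2$; for instance $p=601$ gives $\frac{p-1}{24}=25$, so $\ell=5$ and $v_5(a_0(E))=2$. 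For such a $p$ one then runs over the newforms in $\mathcal{S}_2(\Gamma_0(p))$: a mod-$\lambda$ congruence with $E$ is guaranteed to exist for at least one $f$ by Mazur's theorem, and the task is to test whether, for a rational newform (so $e=1$), that congruence deepens to the modulus $\lambda^2=5^2$.

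\emph{Verification.} Given a candidate $(p,k,f,\ell,\lambda)$, I would certify the congruence $a_n(E)\equiv a_n(f)\pmod{\lambda^m}$ for all $n$ by checking it for finitely many $n$ up to a Sturm-type bound of size $\frac{k}{12}\,[\mathrm{SL}_2(\mathbb{Z}):\Gamma_0(p)]$. Because the modulus $\lambda^m$ is not a maximal ideal, the classical form of Sturm's theorem does not apply verbatim; I would either pass to the graded pieces $\lambda^{i}/\lambda^{i+1}$ and apply the mod-$\lambda$ bound on each, or invoke the prime-power generalisation of Sturm's bound directly. Once the finitely many coefficients agree modulo $\lambda^m$, the congruence holds for all $n$, giving $r\ge m$, and the proposition follows as above.

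\emph{Main obstacle.} The genuine difficulty is not the bookkeeping but the existence itself: the upper bound $r\le m$ carries no guarantee that it is attained, and the deepening of an Eisenstein congruence from order $1$ to order $m$ is precisely the delicate phenomenon the paper studies. Consequently the search may have to range over many primes $p$ (and, if $k=2$ is unproductive, over higher even weights, where both irregular-prime divisibility of the numerator of $B_k$ and ramification of $K_f$ become available) before one finds a form that attains the bound rather than falling short of it; and one must determine $e$ and $v_\ell(a_0(E))$ exactly in order to pin down $m$. Locating and certifying such an example is the crux of the argument.
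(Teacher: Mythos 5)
Your overall strategy is exactly the paper's: the proposition is proved by exhibiting one explicit example and certifying the congruence up to a Sturm-type bound (the paper's Theorem~\ref{Sturm_gen} is precisely the prime-power generalisation you ask for, proved by the graded-piece induction you sketch). Your candidate is even a good one: $p=601$, $k=2$, $\ell=5$ does appear in Table~\ref{tab:UnramifiedHigherPowers} with $r=m=2$; the paper's featured example is instead $p=109$, $k=2$, $\ell=3$, $\lambda=(3,\alpha)$ with $\mu(E)=\tfrac{9}{2}$ and $\mathrm{ord}_\lambda(9)=2$, worked out in Section~\ref{subsection:r_eq_m_gt_1_case}.

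Two points keep this from being a proof. First, and decisively, for an existence statement of this kind the verified example \emph{is} the proof; you candidly defer "locating and certifying such an example" as the crux, so what you have is a correct search-and-verify plan rather than a completed argument. Second, your parenthetical restriction to a \emph{rational} newform would sink the search: by the paper's own proposition on congruences over $\mathbb{Q}$ (Section~\ref{subsection:congruences_over_Q}), rational newforms in weight $2$ only ever yield $r=1$. Indeed the successful form at $p=601$ has coefficient field of degree $29$, and the paper's $p=109$ example has degree $4$. What you presumably want is merely that $\lambda$ be unramified ($e=1$) so that $m=v_\ell(a_0(E))=2$ already forces $m>1$; that is the right condition, but it must be imposed on the prime $\lambda$ of a generally non-rational coefficient field, not by shrinking the field to $\mathbb{Q}$. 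One further small inaccuracy: Mazur's theorem guarantees a mod-$\lambda$ congruence only at almost all prime indices and only for $\ell\neq 2,3$, whereas the congruence~(\ref{congruence}) is required at all $n\geq 0$, so even the depth-one starting point of your "deepening" step needs the full Sturm check rather than a citation of Mazur.
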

\begin{proof}
We present an explicit example in Section \ref{subsection:r_eq_m_gt_1_case}.
\end{proof}

\begin{proposition}
There exists a prime $p$, a positive even integer $k$ and a newform $f\in\mathcal{S}_{k}(\Gamma_{0}(p))$ such that the congruence (\ref{congruence}) holds for all $n\geq 0$ with $m>r$.
\end{proposition}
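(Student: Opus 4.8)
The plan is to exhibit an explicit triple $(p,k,f)$, together with a maximal ideal $\lambda\in\mathcal{O}_f$, for which the congruence (\ref{congruence}) holds but the maximal exponent $r$ is strictly smaller than the a priori bound $m=e\cdot v_\ell(\tfrac{-B_k}{2k}(1-p))$; as in the previous proposition the argument is computational. Conceptually, the point is to separate the two sources of information that determine $r$. On one hand, $m$ is governed solely by the constant term: since $f$ is cuspidal we have $a_0(f)=0$, so the congruence at $n=0$ forces $ord_\lambda(a_0(E))\ge r$, and this is what produces the factor recorded in $m$. On the other hand, the true value of the maximal exponent is $r=\min_{n\ge 1}ord_\lambda\bigl(a_n(E)-a_n(f)\bigr)$, which is controlled by the \emph{nonconstant} coefficients and may be strictly smaller. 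The goal is thus to find parameters for which some nonconstant coefficient breaks the congruence before the constant term would permit it to.

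First I would arrange for $m$ to be genuinely large, so that there is room for a strict inequality: this amounts to forcing $\ell^2$ (or a higher power) to divide the numerator of $a_0(E)$. For fixed $\ell$ and $k$ this is an explicit congruence condition on $p$; for $k=2$, for instance, taking $p\equiv 1\pmod{\ell^2}$ already yields $m\ge 2$ when $\lambda$ is unramified of residue degree one, while for larger $k$ one may instead exploit primes $\ell$ dividing the numerator of $B_k$ to high order. Among the primes $p$ satisfying such a condition I would then, for each newform $f\in\mathcal{S}_k(\Gamma_0(p))$ congruent to $E$ modulo $\lambda$, compute the factorization of $\ell$ in $\mathcal{O}_f$, fix the relevant $\lambda$ together with its ramification index $e$, and evaluate $ord_\lambda(a_n(E)-a_n(f))$ for sufficiently many $n$. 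A convenient auxiliary test, valid when $k>2$, comes from the explicit values $a_p(E)=1$ and $a_p(f)=\pm p^{k/2-1}$ of \cite[Theorem 3]{Atkin}: the quantity $ord_\lambda(1\mp p^{k/2-1})$ is an immediately computable upper bound for $r$, so any example in which it already lies strictly below $m$ settles the proposition at once. In weight $2$ this particular coefficient matches exactly and the obstruction must instead be located among the remaining $a_n$.

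It suffices to display one such triple, recording the offending coefficient explicitly and verifying simultaneously that $r\ge 1$ (so that a congruence genuinely exists) and that it already fails modulo $\lambda^{m}$. The step I expect to be the main obstacle is precisely the verification of this negative statement: the phenomenon $m>r$ reflects the fact that the mod-$\lambda$ Eisenstein congruence need not lift to the full $\lambda$-adic depth predicted by the constant term, and this cannot be read off from $m$ alone. One must therefore compute enough Fourier coefficients to pin down $\min_{n\ge 1}ord_\lambda(a_n(E)-a_n(f))$ exactly, and handle the local data at $\lambda$ carefully when $K_f\ne\mathbb{Q}$, so as to be certain that no alternative choice of $\lambda$ above $\ell$, nor any renormalization of $ord_\lambda$, would restore the equality $r=m$.
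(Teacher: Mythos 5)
Your proposal is correct and follows essentially the same route as the paper: the proposition is proved by exhibiting explicit computational witnesses, and the paper's are $(k,p)=(2,487)$ with $\ell=3$ ($e=1$, $r=3<m=4$), $(k,p)=(2,3001)$ with $\ell=5$ ($e=2$, $r=1<m=6$) and $(k,p)=(2,919)$ with $\ell=3$ ($e=2$, $r=2<m=4$), found by exactly the search-and-verify procedure you describe. The one step to make explicit is that your phrase ``compute enough Fourier coefficients to pin down $\min_{n\ge 1}\mathrm{ord}_{\lambda}(a_n(E)-a_n(f))$'' is made effective by the paper's generalized Sturm bound (Theorem~\ref{Sturm_gen}): checking $n\le \frac{k(p+1)}{12}$ certifies the congruence modulo $\lambda^{r}$ for all $n\ge 0$, while a single offending coefficient (e.g.\ $a_{2}(f)-a_{2}(E)=\frac{1+\sqrt{5}}{2}-3\notin\lambda^{2}$ at $p=3001$) certifies $r<m$.
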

\begin{proof}
We present an explicit example in Section \ref{subsection:m_gt_r_gt_1_e_eq_1_case} ( for $e=1$ and $r>1$), in Section \ref{subsection:m_gt_e_gt_r_e_gt_1_case} (for $e>1$ and $r=1$) and in Section \ref{subsection:m_gt_e_eq_r_e_gt_1_case} (for $e>1$ and $r=e>1$), respectively.
\end{proof}

\begin{observation}\label{observation:bound}
Let $k\leq 22$ be an even positive integer and $p$ be a prime, which is bounded with respect to $k$ as indicated in the table below.
\[\begin{array}{|c|c|c|c|c|c|c|c|c|c|c|c|}
    \hline
    k          & 2     & 4 & 6    & 8  & 10 & 12  & 14 & 16 & 18 & 20 & 22 \\
    \hline
    p\leq & 1789 & 397 & 229 & 193 & 109 & 113 & 97 & 71 & 67 & 67 & 59 \\
    \hline
\end{array}\]
Let $\ell>2$ be a prime such that $v_{\ell}(a_{0}(E_{k}-p^{k-1}E_{k}^{(p)}))>0$. Let $f\in \mathcal{S}_{k}(\Gamma_{0}(p))$ be a newform and $\lambda\in\mathcal{O}_{f}$ be a prime ideal above $\ell$ which is ramified, i.e. $ord_{\lambda}(\ell)=e>1$. If
\[a_{n}(f)\equiv a_{n}(E_{k}-p^{k-1}E_{k}^{(p)})\textrm{ mod }\lambda^{r}\]
for all $n\geq 0$, then $r\leq e$ for every computed case (we found all possible examples in the range described in Table \ref{tab:Levels_full}, Section \ref{Section_numerical} and some further examples in the range described in Table \ref{tab:Levels}, Section \ref{Section_numerical}). We present nontrivial examples of such congruences in Table \ref{tab:RamifiedHighPowers}, Section \ref{Section_numerical}, marking them with boldface.
\end{observation}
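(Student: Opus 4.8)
This is an empirical statement, asserted only for the cases lying in the tabulated range, so the plan is not to produce a general argument but to reduce the claim to an exhaustive and effective finite computation and then to carry it out. The first step is to turn the hypothesis ``for all $n\geq 0$'' into a finite condition. Both the newform $f$ and $E=E_{k}-p^{k-1}E_{k}^{(p)}$ are normalized eigenforms for the Hecke operators $T_{q}$ ($q\neq p$) and for $U_{p}$, sharing the same weight $k$; hence their coefficients obey identical multiplicativity relations and the same recursion $a_{q^{j+1}}=a_{q}a_{q^{j}}-q^{k-1}a_{q^{j-1}}$ at good primes. Consequently the congruence $a_{n}(f)\equiv a_{n}(E)\bmod\lambda^{r}$ for all $n$ is equivalent to the constant-term congruence together with the eigenvalue congruences at primes $q$, where $a_{q}(E)=1+q^{k-1}$ for $q\neq p$ and $a_{p}(E)=1$. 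By the Sturm bound for $\mathcal{S}_{k}(\Gamma_{0}(p))$ only finitely many coefficients need to be inspected.

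The second step is to express the maximal exponent $r$ through $\lambda$-adic valuations. Since $f$ is a cusp form, $a_{0}(f)=0$, so the constant term contributes the constraint $r\leq\mathrm{ord}_{\lambda}(a_{0}(E))=e\cdot v_{\ell}\!\left(-\frac{B_{k}}{2k}(1-p^{k-1})\right)$. At $q=p$ the explicit description of $a_{p}(f)$ from \cite[Theorem 3]{Atkin} gives $a_{p}(f)^{2}=p^{k-2}$, whence $(a_{p}(f)-1)(a_{p}(f)+1)=p^{k-2}-1$; as $\ell>2$ and $a_{p}(f)\equiv 1\bmod\lambda$, the factor $a_{p}(f)+1\equiv 2$ is a $\lambda$-unit, so $\mathrm{ord}_{\lambda}(a_{p}(f)-1)=e\cdot v_{\ell}(p^{k-2}-1)$ and the congruence at $p$ contributes $r\leq e\cdot v_{\ell}(p^{k-2}-1)$. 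The maximal $r$ is then the minimum of these two quantities and of $\mathrm{ord}_{\lambda}(a_{q}(f)-1-q^{k-1})$ over the remaining primes $q\neq p$, consistent with the bound $r\leq m$ established in Lemma \ref{lemma:bound}.

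With these reductions the verification becomes a finite search. For each even $k\leq 22$ and each prime $p$ below the tabulated bound I would compute $a_{0}(E)=-\frac{B_{k}}{2k}(1-p^{k-1})$, factor its numerator, and record the primes $\ell>2$ with $v_{\ell}(a_{0}(E))>0$. For every such $\ell$ I would enumerate the newforms in $\mathcal{S}_{k}(\Gamma_{0}(p))$, determine for each one its Hecke field $K_{f}$ and the factorization of $\ell$ in $\mathcal{O}_{f}$, and retain the ramified primes $\lambda$, those with $e=\mathrm{ord}_{\lambda}(\ell)>1$. For each retained pair $(f,\lambda)$ I would compute the maximal $r$ from the valuations above, confirm $r\leq e$, and record the nontrivial instances in Table \ref{tab:RamifiedHighPowers}.

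The main obstacle is computational rather than conceptual. It lies in the ramified $\lambda$-adic arithmetic: factoring $\ell$ in the rings $\mathcal{O}_{f}$ of newforms whose Hecke fields can have large degree, isolating the ramified primes, and passing to completions to read off both $e$ and the exact valuations of the eigenvalue differences. Equally delicate is certifying that $r$ is genuinely maximal, i.e.\ that the congruence fails modulo $\lambda^{r+1}$; this requires exhibiting a concrete coefficient $a_{n}$, or one of the governing quantities $a_{0}(E)$, $a_{p}(f)-1$, or some $a_{q}(f)-1-q^{k-1}$, whose difference has $\lambda$-adic valuation exactly $r$. Since there is no a priori reason forcing one of the relevant valuations down to make $r\leq e$, the inequality can only be asserted for the cases actually computed, which is precisely the content of the observation.
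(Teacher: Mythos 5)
Your proposal is correct and follows essentially the same route as the paper: the Observation is an empirical claim whose "proof" is the exhaustive finite computation described by the algorithm of Section \ref{subsection:Sketch_algorithm} (factor $\ell$ in an $\ell$-maximal order of $\mathcal{O}_{f}$, isolate the ramified $\lambda$, and compute $r_{\lambda}=\min_{n\leq B}\mathrm{ord}_{\lambda}(a_{n}(f)-a_{n}(E))$), with the reduction of "for all $n\geq 0$" to the finite range $n\leq \frac{k(p+1)}{12}$ supplied by the prime-power Sturm bound. The only detail you elide is that Sturm's theorem for moduli $\lambda^{r}$ with $r>1$ is not immediate and is exactly what the paper's Theorem \ref{Sturm_gen} establishes by induction on $r$; otherwise your valuation bounds at $n=0$ and $n=p$ match Lemma \ref{lemma:bound} and Corollary \ref{corollary:bounds}, and the verification is, as you say, a finite search over the tabulated range.
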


\noindent
For $\ell=2$ and $e>1$ the upper bound $r\leq e$ does not hold.
\begin{proposition}
Let $\ell=2$. There exists a prime level $p$ and a newform $f\in S_{2}(\Gamma_{0}(p))$ such that
\[a_{n}(f)\equiv a_{n}(E_{2}-p E_{2}^{(p)})\textrm{ mod } \lambda^{r}\]
for all $n\geq 0$ and a prime ideal $\lambda\in\mathcal{O}_{f}$ above $2$, such that $1<ord_{\lambda}(2)<r$.
\end{proposition}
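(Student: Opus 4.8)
The statement asserts the existence of a single counterexample to the bound $r\le e$ at the prime $\ell=2$, so the plan is to exhibit one explicit triple $(p,f,\lambda)$ located by a computer search and to verify the congruence directly, in the spirit of the explicit-example strategy used for the preceding propositions. The first step is to pin down the arithmetic conditions any such example must satisfy, which shrinks the search space drastically. By Corollary~\ref{corollary:bounds} and Lemma~\ref{lemma:bound} the maximal exponent obeys $r\le m=e\cdot v_{2}(a_{0}(E))$ with $E=E_{2}-pE_{2}^{(p)}$, so $r>e$ can occur only when $v_{2}(a_{0}(E))\ge 2$. For $k=2$ one has $a_{0}(E)=-\tfrac{B_{2}}{4}(1-p)=\tfrac{p-1}{24}$ with $v_{2}(24)=3$, so this forces $v_{2}(p-1)\ge 5$, i.e. $p\equiv 1\pmod{32}$. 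The smallest candidate is $p=97$ (where $v_{2}(a_{0}(E))=2$ and $m=2e$), and I would then sweep upward through $p=97,193,257,\dots$ inside the ranges of Tables~\ref{tab:Levels_full} and~\ref{tab:Levels}.

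The second step is to isolate the correct newform. A direct computation of the relevant coefficients gives $a_{p}(E)=1$ and $a_{q}(E)=\sigma_{1}(q)=1+q$ for every prime $q\ne p$, while for a newform $f\in\mathcal{S}_{2}(\Gamma_{0}(p))$ Atkin's description (\cite[Theorem~3]{Atkin}) yields $a_{p}(f)=\pm 1$. If $a_{p}(f)=-1$, then $a_{p}(f)-a_{p}(E)=-2$ and $ord_{\lambda}(a_{p}(f)-a_{p}(E))=ord_{\lambda}(2)=e$, so the coefficient at $n=p$ already caps the exponent at $r\le e$. I would therefore retain only the newforms with Atkin--Lehner sign giving $a_{p}(f)=+1$, for which the $n=p$ coefficient imposes no constraint at all, and among these keep only those whose coefficient ring $\mathcal{O}_{f}$ carries a prime $\lambda$ above $2$ with $e=ord_{\lambda}(2)>1$. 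Since $2$ is unramified in $\mathbb{Q}$, this requires a genuinely non-rational, $2$-ramified eigenform, which is why the phenomenon is invisible among the abundant rational newforms.

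With a candidate $(p,f,\lambda)$ in hand the verification is finite. By the Sturm bound for $M_{2}(\Gamma_{0}(p))$ taken modulo the ideal $\lambda^{r}$ (Sturm's argument is valid for an arbitrary ideal, not only a prime), it suffices to test $a_{q}(f)\equiv 1+q\pmod{\lambda^{r}}$ for all primes $q\ne p$ up to $\tfrac{p+1}{6}$, together with $a_{p}(f)=1$; the full congruence for all $n\ge 0$ then follows from multiplicativity of the Hecke eigenvalues and from $ord_{\lambda}(a_{0}(E))=m\ge r$. Beginning at $r=e+1$ I would increase $r$ as long as these congruences persist, reading off the maximal exponent $r=\min\bigl(m,\ \min_{n}ord_{\lambda}(a_{n}(f)-a_{n}(E))\bigr)$, and the example is complete as soon as a case with $r>e>1$ appears.

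The main obstacle is that no theorem guarantees the coincidence $r>e$ ever materializes: the bound $r\le m=e\cdot v_{2}(a_{0}(E))$ only leaves room for it once $p\equiv 1\pmod{32}$, whereas the true value of $r$ is governed by the a priori unpredictable $\lambda$-adic valuations of the differences $a_{n}(f)-a_{n}(E)$ for small $n$, all of which must remain at least $e+1$. The search is what supplies the example. The reason such a case can exist at $\ell=2$ yet is not observed for odd $\ell$ in Observation~\ref{observation:bound} is structural at $n=p$: the rational difference $a_{p}(f)-a_{p}(E)$ equals $0$ or $-2$, and only at $\ell=2$ does $-2$ carry positive $\lambda$-adic valuation; choosing the newform with $a_{p}(f)=+1$ removes even that obstruction, leaving the $2$-adic size of $a_{0}(E)$ free to push $r$ strictly above $e$. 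The proof is therefore finished once the search returns the first admissible triple $(p,f,\lambda)$, which is then recorded explicitly.
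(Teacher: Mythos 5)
Your proposal is a search plan, not a proof: the statement is purely existential, and since (as you yourself note) no theorem guarantees that $r>e$ ever occurs, the only way to establish it is to exhibit a concrete witness --- which you never do. The paper's proof consists precisely of naming that witness: $p=257$, where a newform $f\in\mathcal{S}_{2}(\Gamma_{0}(257))$ with $[K_{f}:\mathbb{Q}]=14$ admits a prime $\lambda\mid 2$ with $ord_{\lambda}(2)=2$ and the congruence holds with $r=5$ (here $v_{2}(p-1)=8$, so $m=e\cdot v_{2}\bigl(\tfrac{p-1}{24}\bigr)=10$). Your sweep $p=97,193,257,\dots$ would have found it, but ``the search is what supplies the example'' followed by no example leaves the proposition unproved.

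That said, your preparatory analysis is sound and goes somewhat beyond what the paper records: the necessary condition $p\equiv 1\pmod{32}$ from $m>e$, and the observation that the Atkin--Lehner sign must give $a_{p}(f)=+1$ (since $a_{p}(f)=-1$ forces $ord_{\lambda}(a_{p}(f)-a_{p}(E))=ord_{\lambda}(-2)=e$, capping $r$ at $e$), are both correct and genuinely useful filters. One smaller inaccuracy: checking $a_{q}(f)\equiv 1+q\pmod{\lambda^{r}}$ only for \emph{primes} $q$ up to the Sturm bound is not by itself the hypothesis of the Sturm-type theorem (the paper's Theorem~\ref{Sturm_gen} requires the congruence for all $n\le\frac{k(p+1)}{12}$); you would need to either verify all indices up to the bound or add the Hecke recursions at prime powers to recover them, so state that reduction carefully if you keep it.
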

\begin{proof}
Put $p=257$. There is a newform $f$ in $\mathcal{S}_{2}(\Gamma_{0}(257))$, which satisfies the congruence (\ref{congruence}) with $r=5$ and $ord_{\lambda}(2)=2$ for a suitable $\lambda\in \mathcal{O}_{f}$, cf. Table \ref{tab:RamifiedHighPowers}, Section \ref{Section_numerical}.
\end{proof}

\noindent
The case of congruences between rational newforms and Eisenstein series in weight $k=2$ is easy to handle as the next result indicates.
\begin{proposition}
Let $f\in S_{2}(\Gamma_{0}(p))$ be a newform with rational coefficients (associated to an elliptic curve over $\mathbb{Q}$). Let $\ell$ be a prime such that $v_{\ell}(a_{0}(E_{2}-pE_{2}^{(p)}))>0$ and
\[a_{n}(f)\equiv a_{n}(E_{2}-pE_{2}^{(p)})\textrm{ mod }\ell^{r}\]
for all $n\geq 0$ and some $r>0$. Then one of the following holds
\begin{enumerate}
\item $\ell=3$, $r=1$ and $p=19$ or $p=37$,
\item $\ell=5$, $r=1$ and $p=11$,
\item $\ell=2$, $r=1$ and $p=17$,
\item $\ell=2$, $r=1$ and $p=u^2+64$ for some integer $u$.
\end{enumerate}
\end{proposition}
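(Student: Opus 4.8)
The plan is to pass from the congruence to the elliptic curve $E/\mathbb{Q}$ of conductor $p$ attached to the rational newform $f$, and then to invoke the classification of elliptic curves of prime conductor that carry a rational torsion point. First I would record the relevant Fourier coefficients of $E_2-pE_2^{(p)}$. Since $B_2=\tfrac16$ one has $a_0(E_2-pE_2^{(p)})=-\tfrac{B_2}{4}(1-p)=\tfrac{p-1}{24}$; for a prime $q\neq p$ the shifted series $E_2^{(p)}$ contributes nothing, so $a_q(E_2-pE_2^{(p)})=\sigma_1(q)=1+q$, while $a_p(E_2-pE_2^{(p)})=(1+p)-p=1$. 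Comparing constant terms in the congruence forces $\ell^{r}\mid\tfrac{p-1}{24}$, which is exactly the bound $r\le v_\ell(\tfrac{p-1}{24})$ of Corollary \ref{corollary:bounds} and Lemma \ref{lemma:bound} specialised to $k=2$ and $e=1$. Comparing coefficients at the good primes gives $a_q(f)\equiv 1+q\pmod{\ell^{r}}$, that is $\#E(\mathbb{F}_q)=q+1-a_q(f)\equiv 0\pmod{\ell^{r}}$ for every prime $q$ of good reduction.

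Reading this modulo $\ell$, the semisimplification of $\overline{\rho}_{E,\ell}$ is $\mathbf{1}\oplus\chi_\ell$, where $\chi_\ell$ is the mod-$\ell$ cyclotomic character; in particular $\overline{\rho}_{E,\ell}$ is reducible of Eisenstein type. By the Weil pairing, one of $E$ and an $\ell$-isogenous curve $E'$ — both of conductor $p$ — carries a $\mathbb{Q}$-rational point of order $\ell$. I would then apply Mazur's theorem on rational torsion to restrict to $\ell\in\{2,3,5,7\}$. Moreover $E$ has multiplicative reduction at $p$, and the prime-to-$p$ torsion injects into the group of nonsingular points $\widetilde{E}^{\mathrm{ns}}(\mathbb{F}_p)$, of order $p-1$ or $p+1$; for odd $\ell$ the relation $a_p(f)\equiv 1\pmod{\ell}$ together with $a_p(f)=\pm1$ forces $a_p(f)=1$, i.e. split reduction, whence $\ell\mid p-1$, matching the constant-term bound.

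The core of the argument is the classification of elliptic curves of prime conductor with a rational point of order $\ell$. For $\ell=2$ this is Setzer's theorem: such a curve exists exactly when $p=17$ or $p=u^{2}+64$ for some integer $u$, giving cases (3) and (4). For odd $\ell$ the known tables of prime-conductor curves leave only $p=19$ and $p=37$ when $\ell=3$ (the curves $19a$ and $37b$), only $p=11$ when $\ell=5$ (the curve $X_0(11)$), and no curve at all when $\ell=7$; imposing in addition $\ell\mid\tfrac{p-1}{24}$ isolates precisely the pairs in (1) and (2). It then remains to determine the exponent. In cases (1), (2), (3) and in the subcase $p=17$ of case (4) a direct computation gives $v_\ell(\tfrac{p-1}{24})=1$, so the constant-term bound already yields $r=1$.

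The hard part is the infinite family $p=u^{2}+64$, where $v_2(\tfrac{p-1}{24})=v_2(p-1)-3$ may exceed $1$ (for instance $p=353$) and the valuation bound alone is not enough; here I must show that the Eisenstein congruence does not lift modulo $4$. The key structural input is that the Neumann--Setzer curve attached to $p=u^{2}+64$ has $2$-primary torsion $\mathbb{Z}/2\mathbb{Z}$: its $2$-division quadratic has discriminant $\pm p$, which is not a square, so there is a single rational $2$-torsion point and no rational point of order $4$. Using this I would argue that $\overline{\rho}_{E,4}$ is not Eisenstein modulo $4$, equivalently that $\#E(\mathbb{F}_q)\not\equiv 0\pmod{4}$ for some good prime $q$ — concretely one exhibits such a $q$ (very often $q=2$, where $\#E(\mathbb{F}_2)=2$) by reading off $a_q(f)$ from the minimal model. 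Then $a_q(f)\not\equiv 1+q\pmod{4}$ and $r=1$. Controlling the mod-$4$ image uniformly across the whole family $p=u^{2}+64$ is the delicate step; the finite cases are settled entirely by the valuation bound.
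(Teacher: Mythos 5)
Your reduction to elliptic curves, the computation of $a_q(E_2-pE_2^{(p)})$, the constant-term bound $r\le v_\ell\bigl(\tfrac{p-1}{24}\bigr)$, and the appeal to Setzer for $\ell=2$ all match the paper's strategy, and the finite cases $(p,\ell)\in\{(19,3),(37,3),(11,5),(17,2)\}$ are handled correctly by the valuation bound. But there is a genuine gap exactly where you flag one: you never prove $r=1$ for the infinite family $p=u^{2}+64$. Your proposed route --- exhibit, for each Neumann--Setzer curve, a good prime $q$ with $4\nmid \#E(\mathbb{F}_q)$ --- is left as a heuristic (``very often $q=2$''), and making it uniform across the family is not a routine verification; knowing that the curve has no rational point of order $4$ does not by itself produce such a $q$. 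The paper closes this gap with the converse implication you are missing, namely Katz's theorem: if $\#\tilde F_q(\mathbb{F}_q)\equiv 0\bmod \ell^{r}$ for all good $q$, then some $\mathbb{Q}$-isogenous curve $F'$ of the same conductor $p$ has rational torsion of order divisible by $\ell^{r}$. For $\ell=2$, $r\ge 2$ this forces torsion $\mathbb{Z}/4\mathbb{Z}$ or $\mathbb{Z}/2\mathbb{Z}\oplus\mathbb{Z}/2\mathbb{Z}$, which by Miyawaki's classification occurs at prime conductor only for $p=17$, where $v_2(a_0)=1$ kills $r\ge 2$. That single step disposes of the whole family at once.

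A secondary difference worth noting: you restrict $\ell$ via Mazur's torsion theorem and then must separately rule out $\ell=7$ by appealing to ``known tables'' of prime-conductor curves, which is not a proof (tables are finite; one needs a theorem that no elliptic curve of prime conductor has a rational $7$-isogeny of the required type). The paper avoids both Mazur and the $\ell=7$ case entirely by a more elementary device: reduce the torsion subgroup $T$ of $F'$ modulo $2$, note the kernel is a $2$-group and $\#\tilde F'_2(\mathbb{F}_2)\le 5$ by Hasse, so $|T|\in\{2^{m},2^{m}\cdot 3,2^{m}\cdot 5\}$. This confines the analysis to Miyawaki's and Setzer's results and simultaneously supplies the uniform control over the exponent that your argument lacks. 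If you want to keep your Galois-representation framing, you should replace the ``exhibit a good $q$'' step by an explicit invocation of Katz's theorem (or an equivalent isogeny-class torsion argument) and give a citable classification for $\ell=7$.
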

\begin{proof}
The proof is given in Section \ref{subsection:congruences_over_Q}.
\end{proof}

\noindent
The number of different newforms congruent to the same Eisenstein series may vary.

\begin{proposition}
There exists a prime $p>2$ and two newforms $f_{1},f_{2}\in\mathcal{S}_{2}(\Gamma_{0}(p))$ (which are not Galois conjugated), such that
\[ a_{n}(f_{1}) \equiv a_{n}(E_{2}-pE_{2}^{(p)})\textrm{ mod }\lambda_{1}^{r_{1}},\]
\[a_{n}(f_{2})\equiv a_{n}(E_{2}-pE_{2}^{(p)})\textrm{ mod }\lambda_{2}^{r_{2}},\]
for all $n\geq 0$, prime ideals $\lambda_{1}\in\mathcal{O}_{f_{1}}$,$\lambda_{2}\in\mathcal{O}_{f_{2}}$ of equal residue field characteristic and some $r_{1},r_{2}>0$.
\end{proposition}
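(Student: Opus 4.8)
The plan is to establish this existence statement, like the preceding propositions, by exhibiting an explicit level $p$ drawn from the computations summarized in Section \ref{Section_numerical}. Conceptually, Mazur's theorem quoted in the introduction guarantees \emph{at least one} newform congruent to $E=E_{2}-pE_{2}^{(p)}$ modulo a prime above each $\ell>3$ dividing the numerator of $a_{0}(E)=\frac{p-1}{24}$; the content of this proposition is to locate a $p$ for which \emph{two} newforms lying in distinct Galois orbits are simultaneously congruent to $E$ modulo primes of one fixed residue characteristic $\ell$. Such configurations occur precisely when the Eisenstein part of the weight-$2$ Hecke algebra at $\ell$ is supported on more than one minimal prime, so they are not forced by Mazur's result and must be produced by search.

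First I would fix a small prime $\ell$ (say $\ell=3,5$ or $7$) and restrict to primes $p$ with $\ell\mid\mathrm{num}(\frac{p-1}{24})$, i.e.\ essentially $p\equiv 1\bmod\ell$. For each such $p$ in the tabulated range I would decompose $\mathcal{S}_{2}(\Gamma_{0}(p))$ into Galois orbits of newforms, compute the reduction of each orbit modulo the primes $\lambda$ above $\ell$, and compare the resulting $q$-expansions with that of $E$. A suitable candidate is a prime $p$ for which two \emph{different} orbits, represented by newforms $f_{1}$ and $f_{2}$, each reduce to $E$ modulo some $\lambda_{1}\mid\ell$ and $\lambda_{2}\mid\ell$ with exponents $r_{1},r_{2}>0$.

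It then remains to certify the two required properties for the chosen example. To show the congruences hold for all $n\geq 0$ (and not merely for almost all primes, as in the congruence (\ref{Mazur_eq})), I would check the constant terms, using $a_{0}(E)=\frac{p-1}{24}$, and then verify the coefficient at $p$: since $a_{p}(E_{2}-pE_{2}^{(p)})=(1+p)-p=1$ and, by the description of $a_{p}(f)$ for a newform of level $p$ recalled from \cite[Theorem 3]{Atkin}, one has $a_{p}(f_{i})=\pm 1$ in weight $2$, I would confirm that each $f_{i}$ has Atkin--Lehner sign forcing $a_{p}(f_{i})=+1$ (for odd $\ell$ the alternative $a_{p}(f_{i})=-1$ would require $\lambda_{i}^{r_{i}}\mid 2$, which is impossible). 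Hecke multiplicativity then propagates each congruence to all $n\geq 0$. To show $f_{1}$ and $f_{2}$ are not Galois conjugate I would exhibit a prime $q\neq p$ at which the characteristic polynomials of the Hecke eigenvalues $a_{q}(f_{1})$ and $a_{q}(f_{2})$ differ, equivalently separating their Hecke eigenvalue fields.

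The main obstacle is this last point: it is easy to find two \emph{embeddings} of a single orbit that are both congruent to $E$, so the real work is certifying that $f_{1}$ and $f_{2}$ genuinely come from distinct newform orbits while \emph{both} surviving the Eisenstein congruence at the same $\ell$. In practice this is settled by direct computation with the Hecke eigensystems. The theoretical reason such $p$ should exist, and not be too rare, is that for suitable $\ell$ and $p\equiv 1\bmod\ell$ the dimension of the $\ell$-Eisenstein congruence space can exceed that of any single orbit, so the congruence cannot be accounted for by one orbit alone.
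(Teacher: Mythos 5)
Your overall strategy is the same as the paper's: this is an existence statement settled by exhibiting an explicit prime level found by computer search, and your description of how to search for and certify such a level is sensible. However, as written your proposal is a search plan rather than a proof: you never actually name a prime $p$, and for a statement of this kind the concrete example \emph{is} the proof. The paper supplies $p=353$, where $\mathcal{S}_{2}(\Gamma_{0}(353))$ has four Galois conjugacy classes of newforms and three of them (classes $1$, $2$ and $4$ in the internal MAGMA numbering) contain a newform congruent to $E_{2}-353E_{2}^{(353)}$ modulo a prime above $\ell=2$; it also gives $p=487$ with $\ell=3$ (classes $2$ and $4$), cf.\ Table \ref{tab:UnramifiedHigherPowers} and Section \ref{subsection:m_gt_r_gt_1_e_eq_1_case}. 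Without such a witness the argument does not close.

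There is also a smaller gap in your certification step. Checking $a_{0}$ and $a_{p}$ and then invoking ``Hecke multiplicativity'' does not propagate the congruence to all $n\geq 0$: multiplicativity reduces the problem to prime and prime-power indices, but you still need the congruence $a_{q}(f_{i})\equiv 1+q \bmod \lambda_{i}^{r_{i}}$ at \emph{every} prime $q\neq p$, and that is an infinite family of conditions. The finite certificate for this is precisely the generalized Sturm bound (Theorem \ref{Sturm_gen}): verifying the congruence for all $n\leq \frac{k(p+1)}{12}$ suffices. Your Atkin--Lehner observation that $a_{p}(f_{i})=-1$ is excluded for odd $\ell$ is correct and matches the mechanism used in Lemma \ref{lemma:bound}, and your remark about distinguishing the two orbits by a Hecke eigenvalue at some $q\neq p$ is fine; but the passage from a finite check to all $n$ must go through the Sturm-type theorem rather than multiplicativity alone.
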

\begin{proof}
Consider a prime level $p=353$. The space $\mathcal{S}_{2}(\Gamma_{0}(353))$ has dimension $29$ and it has $4$ different Galois conjugacy classes of newforms. With respect to the internal MAGMA numbering, the first, the second and the fourth Galois conjugacy class contains a newform congruent to the Eisenstein series $E_{2}-pE_{2}^{(p)}$ modulo a prime above $\ell=2$. For $\ell=3$, we can take $p=487$ and find newforms congruent to the Eisenstein series in Galois conjugacy classes with numbers $2$ and $4$, cf. Section \ref{subsection:m_gt_r_gt_1_e_eq_1_case} and Table \ref{tab:UnramifiedHigherPowers}, Section \ref{Section_numerical}.
\end{proof}

\begin{remark}
Mazur suggests that there may be infinitely many such examples for $\ell=2$, cf. \cite[II.19]{Mazur}. 
\end{remark}

\noindent
The majority of our examples satisfy the property that the residue field $\mathcal{O}_{f}/\lambda$ is a field with $\ell$ elements. However, this is not always the case, cf. Section \ref{subsection:large_residue_field}. Also the ring of integers $\mathcal{O}_{f}$ is not always equal to $\mathbb{Z}[\theta]$, where $\theta$ is an algebraic integer such that $K_{f}=\mathbb{Q}(\theta)$. It can happen also that $[\mathcal{O}_{f}:\mathbb{Z}[\theta]]$ is divisible by $\ell$, i.e. $\mathbb{Z}[\theta]$ is not $\ell$-maximal in $\mathcal{O}_{f}$, cf. Section \ref{subsection:Equation_order_is_not_always}.

\medskip\noindent 
Our motivation to study the congruences (\ref{congruence}) comes from the question posed in \cite[Paragraph 4.4]{Wiese}. The authors study congruences between a newform $f\in\mathcal{S}_{2}(\Gamma_{0}(p))$ and an Eisenstein series $E\in\mathcal{E}_{2}(\Gamma_{0}(p))$
\[a_{n}(f)\equiv a_{n}(E)\,\mathop{mod}\,\lambda^{r},\]
which hold for almost all prime indices $n$. Our approach is more restrictive, since in the congruences we take into account all indices $n\geq 0$. 

\medskip\noindent
The computations done in Section \ref{subsection:m_gt_e_eq_r_e_gt_1_case} and Section \ref{subsection:large_residue_field} suggest that the equality holds in \cite[Question 4.1]{Wiese}.

\medskip\noindent
In the first case, where $k=2$ and $p=919$ we obtain two distinct Galois conjugate newforms which are congruent to the Eisenstein series modulo $3$ (in the sense of \cite[Definition 2.2]{Wiese}), while the numerator of $\frac{p-1}{12}$ is divisible by $9$. In the second case ($k=2, p=401$), we find a pair of distinct Galois conjugate newforms congruent to the Eisenstein series modulo $5$ (in the sense of \cite[Definition 2.2]{Wiese}), while the numerator of $\frac{p-1}{12}$ is divisible by $25$.


\medskip\noindent
It would be desirable to extend our computations to take into account the situation when the cusp form and the Eisenstein series have different weights. Such a computational and theoretical study was done for two cusp forms in \cite{Kiming}. Unfortunately, we cannot apply directly the results of the paper \cite{Kiming} to our situation.

\medskip\noindent
The proposed upper bound for $r$ recorded in Observation \ref{observation:bound} might be linked to the behavior of the inertia group at $p$ of the residually reducible Galois representation attached to the newform $f$. Such a condition is given in \cite[Theorem 2]{Dieulefait}, where the authors deal with congruences between two cusp forms.


\medskip\noindent
\textit{Contents of the paper:} In Section \ref{Section_notation} we introduce basic notation and describe Hecke algebras and eigenforms. Next, in Section \ref{Section_algorithm} we describe the upper bound for the exponent of congruences between cuspidal eigenforms and Eisenstein series.

\medskip\noindent
In Section \ref{Subsection_orders} for the convenience of the reader we collect basic facts of the theory of $\ell$-maximal orders which is an important ingredient of our algorithm. These facts are crucial for several improvements of the algorithm speed. 

\medskip\noindent
Section \ref{subsection:Sketch_algorithm} contains a pseudo code description of the main algorithm  which was implemented in MAGMA \cite{Magma}. The source code is available on request or online, cf. \cite{Bartek}.

\medskip\noindent
Section \ref{Section_numerical} is devoted to presentation of the numerical data and proofs of propositions stated above. We discuss several explicit examples and the numerical data collected in the tables.

\section{Notation and definitions}\label{Section_notation}
Let $p$ be a prime number and $k$ a positive even integer. The space $\mathcal{M}_{k}(\Gamma_{0}(p))$ of holomorphic modular forms of weight $k$ and level $\Gamma_{0}(p)$ splits over $\mathbb{C}$ into a direct sum
\[\mathcal{M}_{k}(\Gamma_{0}(p))=\mathcal{E}_{k}(\Gamma_{0}(p))\oplus\mathcal{S}_{k}(\Gamma_{0}(p))\]
of the Eisenstein part and the space of cuspidal modular forms (cf.\cite[Paragraph 5.11]{Diamond}).
\medskip\noindent
From dimension formulas for modular forms we have
\[\dim_{\mathbb{C}}(\mathcal{E}_{k}(\Gamma_{0}(p))=\left\{\begin{array}{ccc}
1,& & k = 2\\
2,& & k\geq 4.
\end{array}\right.\]
Let $\sigma_{r}(n)=\sum_{d\mid n}d^r$ and $q=e^{2\pi i \tau}$, where $\tau$ lies on the complex upper half-plane $\mathcal{H}$. We define
\[E_{k}(\tau)=-\frac{B_{k}}{2k}+\sum_{n=1}^{\infty}\sigma_{k-1}(n)q^n.\]
The sequence of Bernoulli numbers $\{B_{m}\}_{m\in\mathbb{N}}$ is defined as usual by the series $\sum_{m=0}^{\infty}{B_{m}}t^{m}=\frac{t}{e^{t}-1}$.
Explicitly, in $\mathcal{E}_{2}(\Gamma_{0}(p))$ we have a generator 
\[E_{2}(\tau)-pE_{2}(p\tau)=\frac{p-1}{24}+\sum_{n=1}^{\infty}\sigma_{1}(n)q^n-p\sum_{n=1}^{\infty}\sigma_{1}(n)q^{pn}.\]
The space $\mathcal{E}_{k}(\Gamma_{0}(p))$ is generated by $E_{k}(\tau)$ and $E_{k}(p\tau)$ when $k\geq 4$.

\medskip\noindent
The space of modular forms $\mathcal{M}_{k}(\Gamma_{0}(p))$ carries a natural action of a commutative $\mathbb{C}$-algebra $\mathbb{T}$ generated by the Hecke operators, cf.\cite[Proposition 5.2.1]{Diamond}. The algebra is generated by two types of operators. The first type is defined for the primes $\ell\neq p$ by the formula 
\[T_{\ell}(f)=\sum_{n=0}^{\infty}a_{n\ell}(f)q^n+\ell^{k-1}\sum_{n=0}^{\infty}a_{n}(f)q^{n\ell},\]
where $f\in \mathcal{M}_{k}(\Gamma_{0}(p))$ and $a_{n}(f)$ denotes the $n$-th Fourier coefficient of the form $f$ at infinity. 
For $\ell=p$ there is a single operator
\[T_{p}(f)=\sum_{n=0}^{\infty}a_{np}(f)q^{n}.\]

\noindent
We denote by $\mathbb{T}$ the algebra $\mathbb{C}[\{T_{r}\}]$ generated by the Hecke operators $T_{r}$, where $r$ runs through all rational primes. The action of Hecke algebra $\mathbb{T}$ on the space $\mathcal{M}_{k}(\Gamma_{0}(p))=\mathcal{E}_{k}(\Gamma_{0}(p))\oplus\mathcal{S}_{k}(\Gamma_{0}(p))$ preserves the direct sum splitting into Eisenstein and cuspidal parts. For $k=2$ since $\dim\mathcal{E}_{2}(\Gamma_{0}(p))=1$, the series $E_{2}-pE_{2}^{(p)}$ is the unique normalized eigenform. 

\medskip\noindent
For $k\geq 4$ the dimension of the space $\mathcal{E}_{k}(\Gamma_{0}(p))$ is equal to two.
We have a basis of the space consisting of normalized eigenforms
\[E_{k}-p^{k-1}E_{k}^{(p)},\quad E_{k}-E_{k}^{(p)}.\]


\section{Bounds on congruences}\label{Section_algorithm} 
Let $k$ be an even positive integer and $p$ be a rational prime. We want to find congruences between Eisenstein series $E_{k}-p^{k-1}E_{k}^{(p)}$ and cuspidal newforms in the space $\mathcal{M}_{k}(\Gamma_{0}(p))$.  
Let $f$ be a newform in $\mathcal{S}_{k}(\Gamma_{0}(p))$. Assume there exists a prime ideal $\lambda$ in $\mathcal{O}_{f}$ and a natural number $r$ such that
\begin{equation}\label{Eq:condition1}
a_{n}(E_{k}-p^{k-1}E_{k}^{(p)})\equiv a_{n}(f)\textrm{ mod }\lambda^{r}.
\end{equation}
The bound on $r$ depends on the $q$-expansion of the Eisenstein series at cuspidal points of the modular curve $X_{0}(p)$.
The modular curve $X_{0}(p)$ has two cusps, $0$ and $\infty$. Hence for any modular form $f\in\mathcal{M}_{k}(\Gamma_{0}(p))$ we have $q$-expansions at $\infty$ and $0$. We compute expansions for $f$ and $f\mid_{k}\left(\begin{array}{cc}
0 & -1\\
1 & 0
\end{array}\right)$. 
We denote by $\mu(f)$ the zeroth coefficient of the $q$-expansion of the form $f$ at $0$. Equivalently, this is the zeroth coefficient of the $q$-expansion of the form $f\mid_{k}\left(\begin{array}{cc}
0 & -1\\
1 & 0
\end{array}\right)$ at $\infty$. 

\begin{lemma}\label{lemma:bound}
Let $p$ be a prime number, $k\geq 2$ be an even integer and $f\in \mathcal{S}_{k}(\Gamma_{0}(p))$ be a newform.
Let $\lambda$ be a prime ideal in $\mathcal{O}_{f}$ such that $p\notin\lambda$ and let $r\geq 1$ be a natural number. Let $E$ denote the Eisenstein series $E_{k}-p^{k-1}E_{k}^{(p)}$. Suppose we have a congruence
\begin{equation}\label{equation:congruence}
a_{n}(f)\equiv a_{n}(E)\textrm{ mod }\lambda^{r}
\end{equation}
for all $n\geq 0$. Then $\mu(E)\equiv 0\textrm{ mod }\lambda^{r}$. Hence the form $E$ is cuspidal modulo $\lambda^{r}$. 
\end{lemma}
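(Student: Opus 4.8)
The plan is to transfer the congruence from the cusp $\infty$, where it is given, to the cusp $0$, exploiting the fact that a cusp form vanishes at every cusp. Write $S=\left(\begin{array}{cc}0&-1\\1&0\end{array}\right)$, so that $\mu(h)$ is by definition the constant term at $\infty$ of $h|_{k}S$. First I would reduce everything to the single form $g:=E-f$. Since $f$ is cuspidal it vanishes at the cusp $0$, so $\mu(f)=0$, and as $\mu$ is linear in $h$ we get $\mu(E)=\mu(E)-\mu(f)=\mu(g)$. Taking $n=0$ in the hypothesis and using $a_{0}(f)=0$ shows $a_{0}(E)\in\lambda^{r}$; hence every Fourier coefficient of $g$ at $\infty$ lies in $\lambda^{r}$ and in particular $g$ is $\lambda$-integral. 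It therefore suffices to prove: a weight-$k$ form on $\Gamma_{0}(p)$ whose $\infty$-expansion has all coefficients in $\lambda^{r}$ has constant term at the cusp $0$ again in $\lambda^{r}$. The assumption $p\notin\lambda$ will be decisive here.

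Next I would make the transfer explicit through the Fricke involution $w_{p}=\left(\begin{array}{cc}0&-1\\p&0\end{array}\right)=S\left(\begin{array}{cc}p&0\\0&1\end{array}\right)$, which interchanges the cusps $0$ and $\infty$; thus the $\infty$-expansion of $g|_{k}w_{p}$ is, up to the scaling by a power of $p$ introduced by $\left(\begin{array}{cc}p&0\\0&1\end{array}\right)$, the expansion of $g$ at the cusp $0$, whose constant term is $\mu(g)$. Since $p\notin\lambda$, that scaling is a $\lambda$-unit. As a check one can evaluate $\mu(E)$ directly from the level-one identity $E_{k}|_{k}S=E_{k}$: a short computation gives $E_{k}^{(p)}|_{k}S=p^{-k}E_{k}(\tau/p)$ and therefore $\mu(E)=-\frac{B_{k}}{2k}\cdot\frac{p-1}{p}$, whose $\lambda$-valuation is exactly the quantity appearing in the bound on $r$ stated in the introduction. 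The content of the lemma, though, is the divisibility $\mu(E)\equiv 0\bmod\lambda^{r}$, not the value itself; note that for $k>2$ this does not follow from the $n=0$ congruence alone, since $v_{\lambda}(\mu(E))<v_{\lambda}(a_{0}(E))$ as soon as $\ell\mid 1+p+\cdots+p^{k-2}$, so the full congruence at every $n$ is genuinely needed.

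The main obstacle is to justify that the passage from the cusp $\infty$ to the cusp $0$ preserves $\lambda^{r}$-divisibility: analytically $|_{k}w_{p}$ is merely a substitution in $\tau$, and there is no formal reason for the resulting $q$-expansion to keep its coefficients in the same ideal. I would settle this with the $q$-expansion principle for $X_{0}(p)$ over the $\mathbb{Z}[1/p]$-algebra $\mathcal{O}_{f}/\lambda^{r}$: because $p\notin\lambda$ the curve has good reduction at $\lambda$ and its two cusps are disjoint sections, so the reduction of $g$ modulo $\lambda^{r}$ is a modular form over $\mathcal{O}_{f}/\lambda^{r}$ whose $\infty$-expansion vanishes, hence is the zero form, hence has vanishing constant term at the cusp $0$ as well. (Equivalently, the $\infty$-expansion lying in $\lambda^{r}\mathcal{O}_{f,\lambda}[[q]]$ forces $g$ into $\lambda^{r}\mathcal{M}_{k}(\Gamma_{0}(p);\mathcal{O}_{f,\lambda})$, and the integral constant-term-at-$0$ functional then lands in $\lambda^{r}$.) This yields $\mu(E)=\mu(g)\equiv 0\bmod\lambda^{r}$. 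Since $a_{0}(E)\equiv 0\bmod\lambda^{r}$ too, the Eisenstein series $E$ has both of its constant terms divisible by $\lambda^{r}$, i.e.\ it is cuspidal modulo $\lambda^{r}$, as required.
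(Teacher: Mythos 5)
Your argument is correct, but it takes a genuinely different route from the paper's. The paper never leaves the level of $q$-expansion coefficients: it evaluates $\mu(E)=-\frac{B_{k}}{2k}\left(1-\frac{1}{p}\right)$ explicitly, rewrites it as $-\frac{B_{k}}{2k}(1-p^{k-2})+\frac{1}{p}\cdot\frac{B_{k}}{2k}(1-p^{k-1})$, and kills both summands modulo $\lambda^{r}$ using only two instances of the hypothesis: $n=0$ (giving $a_{0}(E)\in\lambda^{r}$, as you also note) and $n=p$, where the Atkin--Lehner formula $a_{p}(f)=-\epsilon_{p}p^{k/2-1}$ together with $a_{p}(E)=1$ yields $1-p^{k-2}\in\lambda^{r}$ after squaring. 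You instead transport the congruence geometrically from the cusp $\infty$ to the cusp $0$ via the $q$-expansion principle on the integral model of $X_{0}(p)$ away from $p$. Both routes are sound; your identification of the delicate point --- that $|_{k}w_{p}$ is analytically just a substitution with no formal reason to preserve $\lambda$-integrality, so one must invoke the integral structure at the cusps, with $p\notin\lambda$ rendering the normalizing powers of $p$ harmless --- is exactly where the real content lies, and your remark that the $n=0$ congruence alone cannot suffice for $k>2$ once $\ell\mid 1+p+\cdots+p^{k-2}$ correctly explains why some further input is unavoidable. The trade-off is this: your proof needs the full congruence and the heavier, citation-requiring machinery of Katz/Deligne--Rapoport (integrality of the constant-term-at-$0$ functional and injectivity of the $q$-expansion map over $\mathcal{O}_{f}/\lambda^{r}$), but it applies verbatim to any cusp form satisfying the hypothesis; the paper's proof is elementary and isolates the two coefficients $a_{0}$ and $a_{p}$ that actually matter, but it uses the newform hypothesis in an essential way through the value of $a_{p}(f)$.
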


\begin{proof}
For any even integer $k\geq 2$ the formula
\[(E_{k}-p^{k-1}E_{2}^{(p)})\mid_{k}\left(\begin{array}{cc}
0 & -1\\
1 & 0
\end{array}\right)=E_{k}-\frac{1}{p}E_{k}^{(1/p)}\]
holds. It follows that $\mu(E)$ equals $-\frac{B_{k}}{2k}\left(1-\frac{1}{p}\right)$. By \cite[Theorem 3]{Atkin} we have that
\[a_{p}(f)=-\epsilon_{p}p^{k/2-1}\]
for some $\epsilon_{p}\in\{-1,1\}$. On the other side, $a_{p}(E)=1$, hence $-\epsilon_{p}p^{k/2-1}\equiv 1\,\mathop{mod}\,\lambda^{r}$ by (\ref{equation:congruence}). Thence, we obtain the congruence
\begin{equation}\label{equation:congruence_p}
1-p^{k-2}\equiv 0\,\mathop{mod}\,\lambda^{r}.
\end{equation}
Observe that $\mu(E)=-\frac{B_{k}}{2k}(1-p^{k-2})+\left(-\frac{B_{k}}{2k}(1-p^{k-1})\right)\left(-\frac{1}{p}\right)$.
Since $f$ is a cuspform, the assumption (\ref{equation:congruence}) implies that $a_{0}(E)\equiv 0\,\mathop{mod}\,\lambda^{r}$, hence $-\frac{B_{k}}{2k}(1-p^{k-1})\equiv 0\,\mathop{mod}\,\lambda^{r}$. The last congruence and the congruence (\ref{equation:congruence_p}) imply that $\mu(E)\equiv 0\,\mathop{mod}\,\lambda^{r}$.

\end{proof}

\begin{corollary}\label{corollary:bounds}
Let $p$ and $\ell$ be two distinct rational primes. Suppose that we have two integers $k\geq 2$, $r\geq 1$ and $k$ is even. Let $f$ be a newform in $\mathcal{S}_{k}(\Gamma_{0}(p))$. Suppose that for $E=E_{k}-p^{k-1}E_{k}^{(p)}$ we have congruences
\[a_{n}(f)\equiv a_{n}(E)\,\mathop{mod}\,\lambda^{r}\]
for all $n\geq 0$ and some prime ideal $\lambda$ in $\mathcal{O}_{f}$ dividing $\ell$. 
There is an upper bound
\[r\leq\textrm{ord}_{\lambda}(\ell)\cdot v_{\ell}\left(-\frac{B_{k}}{2k}(1-p)\right),\]
where $v_{\ell}$ denotes the $\ell$-adic valuation on $\mathbb{Q}$. 
\end{corollary}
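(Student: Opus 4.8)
The plan is to let Lemma~\ref{lemma:bound} carry the analytic content and then reduce the statement to elementary valuation bookkeeping. First I would check that the hypotheses of the lemma are met: since $p$ and $\ell$ are distinct and $\lambda$ lies over $\ell$, we have $p\notin\lambda$, so the lemma applies and gives $\mu(E)\equiv 0\textrm{ mod }\lambda^{r}$, that is, $\textrm{ord}_{\lambda}(\mu(E))\geq r$.

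Next I would make the quantity $\mu(E)$ explicit and compare it with $-\frac{B_{k}}{2k}(1-p)$. From the proof of the lemma we have $\mu(E)=-\frac{B_{k}}{2k}\left(1-\frac{1}{p}\right)$, and since $1-\frac{1}{p}=-\frac{1}{p}(1-p)$ this rewrites as $\mu(E)=-\frac{1}{p}\left(-\frac{B_{k}}{2k}(1-p)\right)$. Thus $\mu(E)$ and $-\frac{B_{k}}{2k}(1-p)$ differ only by the factor $-\frac{1}{p}$. Because $p\neq\ell$ we have $\textrm{ord}_{\lambda}(p)=0$, so $-\frac{1}{p}$ is a $\lambda$-adic unit and therefore $\textrm{ord}_{\lambda}(\mu(E))=\textrm{ord}_{\lambda}\left(-\frac{B_{k}}{2k}(1-p)\right)$.

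Finally I would pass from the $\lambda$-adic valuation to the $\ell$-adic valuation on $\mathbb{Q}$. The normalized valuation $\textrm{ord}_{\lambda}$ restricts on $\mathbb{Q}$ to $\textrm{ord}_{\lambda}(\ell)$ times $v_{\ell}$, since the ramification index of $\lambda$ over $\ell$ is precisely $e=\textrm{ord}_{\lambda}(\ell)$; applying this to the rational number $-\frac{B_{k}}{2k}(1-p)$ and combining with $r\leq\textrm{ord}_{\lambda}(\mu(E))$ from the first step yields the asserted bound. I do not expect a genuine obstacle here, as the substantive work is already contained in Lemma~\ref{lemma:bound}; the only points demanding care are the unit computation, which relies essentially on the hypothesis $p\neq\ell$ to guarantee that $\textrm{ord}_{\lambda}(p)=0$, and the correct normalization relating $\textrm{ord}_{\lambda}$ and $v_{\ell}$.
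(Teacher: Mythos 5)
Your proposal is correct and follows essentially the same route as the paper: invoke Lemma~\ref{lemma:bound} to get $\mu(E)\equiv 0 \bmod \lambda^{r}$, use that $p$ is a $\lambda$-adic unit to replace $1-\tfrac{1}{p}$ by $1-p$, and convert $\textrm{ord}_{\lambda}$ to $\textrm{ord}_{\lambda}(\ell)\cdot v_{\ell}$ on rationals. The only cosmetic difference is that the paper also records the (redundant for this bound) inequality $r\leq\min(\textrm{ord}_{\lambda}(a_{0}(E)),\textrm{ord}_{\lambda}(\mu(E)))$, whereas you go directly through $\mu(E)$.
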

\begin{proof}
By Lemma \ref{lemma:bound} we have $\mu(E)\equiv 0\,\mathop{mod}\,\lambda^{r}$, hence $-\frac{B_{k}}{2k}\left(1-\frac{1}{p}\right)\equiv 0\,\mathop{mod}\,\lambda^{r}$. Since $p$ is invertible modulo $\lambda$, then
\[-\frac{B_{k}}{2k}\left(1-p\right)\equiv 0\,\mathop{mod}\,\lambda^{r}.\]
The exponent $r$ satisfies the inequality $r\leq \mathop{min}(\textrm{ord}_{\lambda}(a_{0}(E)),\textrm{ord}_{\lambda}(\mu(E)))$. But we have for $k\geq 2$ \[\textrm{ord}_{\lambda}\left(-\frac{B_{k}}{2k}(1-p^{k-1})\right)\geq \textrm{ord}_{\lambda}\left(-\frac{B_{k}}{2k}(1-p)\right),\]
hence 
\[r\leq\textrm{ord}_{\lambda}(\ell)\cdot v_{\ell}\left(-\frac{B_{k}}{2k}(1-p)\right).\]
\end{proof}
\begin{remark}\label{remark:cases} 
Let $p\in\lambda$ or equivalently $p=\ell$. For $k>2$ the congruence (\ref{equation:congruence_p}) (which holds either when $p=\ell$ or when $p\neq \ell$) implies that $1-\ell^{k-2}\in\lambda^{r}$, hence $1-\ell^{k-2}\in\lambda$, hence $1\in\lambda$, which leads to a contradiction. If $k=2$, we observe that $a_{0}(E_{2}-pE_{2}^{(p)})\equiv 0\,\mathop{mod}\,\lambda^{r}$. Hence $\textrm{ord}_{\lambda}(-\frac{1}{24}(1-p))\geq r\geq 1$, so $1-p\in\lambda$ and $1\in\lambda$, since $\ell=p$, which is impossible.
\end{remark}

\medskip
\noindent
In the computations below we use a straightforward generalization of the well-known theorem of Sturm \cite{Sturm}, suitable for our purposes. A similar theorem in more general form is proved in \cite[Proposition 1]{Kiming}.

\begin{theorem}\label{Sturm_gen}
Let $p$ be a rational prime and $k\geq 2$ be an even integer. Let $f\in\mathcal{S}_{k}(\Gamma_{0}(p))$ be a normalized eigenform. Let $\ell$ be a rational prime dividing the numerator of $-\frac{B_{k}}{2k}(1-p^{k-1})$. Suppose we have a positive integer $r$ and a nonzero prime ideal $\lambda$ in $\mathcal{O}_{f}$, containing $\ell$, such that for all $n\leq \frac{k(p+1)}{12}$ there is a congruence
\[a_{n}(f)\equiv a_{n}(E_{k}-p^{k-1}E_{k}^{(p)})\,\mathop{mod}\,\lambda^{r}.\]
Then the congruence holds for all $n\geq 0$.
\end{theorem}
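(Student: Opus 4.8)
The plan is to apply a Sturm-type vanishing argument to the single modular form $g := f - E$, where $E = E_{k}-p^{k-1}E_{k}^{(p)}$, and to pass from the statement modulo $\lambda$ to the statement modulo $\lambda^{r}$ by induction on $r$. First I would record that $g$ lies in $\mathcal{M}_{k}(\Gamma_{0}(p))$ and has $\lambda$-integral $q$-expansion. Indeed, the coefficients $a_{n}(f)$ for $n\geq 1$ lie in $\mathcal{O}_{f}$, the coefficients $a_{n}(E)$ for $n\geq 1$ are rational integers, and the hypothesis that $\ell$ divides the \emph{numerator} of $a_{0}(E)=-\frac{B_{k}}{2k}(1-p^{k-1})$ forces $a_{0}(E)$ to be $\ell$-integral, hence $\lambda$-integral; this is precisely where that hypothesis enters. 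Since $[\mathrm{SL}_{2}(\mathbb{Z}):\Gamma_{0}(p)]=p+1$, the classical Sturm bound for weight $k$ and level $\Gamma_{0}(p)$ is $\frac{k}{12}(p+1)=\frac{k(p+1)}{12}$, which is exactly the range assumed in the statement. Note that, for the vanishing argument, one only needs $g$ to be a $\lambda$-integral form; the eigenform hypothesis plays no role beyond guaranteeing the setup.

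For the base case $r=1$ I would simply invoke Sturm's theorem \cite{Sturm} for the maximal ideal $\lambda$ of $\mathcal{O}_{f}$ (equivalently \cite[Proposition 1]{Kiming}): if $a_{n}(g)\equiv 0 \bmod \lambda$ for all $n\leq \frac{k(p+1)}{12}$, then $a_{n}(g)\equiv 0 \bmod \lambda$ for all $n$. For the inductive step, assume the assertion for $r-1$. The mod-$\lambda^{r}$ hypothesis implies the mod-$\lambda^{r-1}$ hypothesis in the same range, so by induction $\mathrm{ord}_{\lambda}(a_{n}(g))\geq r-1$ for \emph{all} $n$. Passing to the localization $\mathcal{O}_{f,\lambda}$, a discrete valuation ring with uniformizer $\pi$, I can then write $g=\pi^{r-1}h$ with $h:=\pi^{-(r-1)}g$. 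Because $\pi^{-(r-1)}$ is merely a nonzero scalar, $h$ is again a holomorphic form in $\mathcal{M}_{k}(\Gamma_{0}(p))$ of the same weight and level, now with coefficients in $\mathcal{O}_{f,\lambda}$. Dividing the hypothesis $\mathrm{ord}_{\lambda}(a_{n}(g))\geq r$ (for $n\leq \frac{k(p+1)}{12}$) by $\pi^{r-1}$ gives $\mathrm{ord}_{\lambda}(a_{n}(h))\geq 1$, i.e. $a_{n}(h)\equiv 0 \bmod \lambda$ in that range. Applying the base case to $h$ yields $a_{n}(h)\equiv 0 \bmod \lambda$ for all $n$, that is $\mathrm{ord}_{\lambda}(a_{n}(g))\geq r$ for all $n$, which is exactly the conclusion modulo $\lambda^{r}$.

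The main obstacle is securing the base case in the correct generality: one needs Sturm's vanishing theorem for a \emph{maximal ideal} of the number ring $\mathcal{O}_{f}$ (rather than only for $\mathbb{Z}$ and a rational prime) together with the index-$(p+1)$ bound, and one must verify that the division by $\pi^{r-1}$ in the inductive step genuinely returns a holomorphic modular form of the \emph{same} weight and level, so that the identical bound $\frac{k(p+1)}{12}$ can be reused at each stage. Both points are routine once one works in the localization $\mathcal{O}_{f,\lambda}$: the first is precisely the content of \cite{Sturm} (or \cite[Proposition 1]{Kiming}), and the second is immediate since $h$ differs from $g$ only by a scalar. This is the sense in which the result is a straightforward generalization of Sturm's theorem, the only genuinely new ingredient being the reduction of the prime-power case to the prime case via the uniformizer.
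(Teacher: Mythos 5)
Your proof follows the same route as the paper: induction on $r$, with the base case given by Sturm's theorem and the inductive step obtained by dividing $f-E$ by $\pi^{r-1}$ for a uniformizer $\pi$ of $\lambda$ and applying Sturm again to the quotient. The one place where your write-up is thinner than the paper's --- and it is precisely the only non-formal point of the argument --- is the assertion that the base case applies to $h=\pi^{-(r-1)}(f-E)$, whose Fourier coefficients a priori lie only in the localization $(\mathcal{O}_{f})_{\lambda}$ rather than in $\mathcal{O}_{f}$. The version of Sturm's theorem the paper cites (Stein, Theorem 9.18) is stated for forms with coefficients in the ring of integers, and one cannot pass from $(\mathcal{O}_{f})_{\lambda}$-coefficients to $\mathcal{O}_{f}$-coefficients by clearing denominators term by term, since there are infinitely many coefficients and hence a priori no common denominator prime to $\lambda$. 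The paper supplies the missing finiteness input via Shimura's theorem that $\mathcal{M}_{k}(\Gamma_{0}(p))$ has a basis with Fourier coefficients in $\mathbb{Z}$: writing $h$ as a finite linear combination of such basis forms produces a single $\alpha\in\mathcal{O}_{f}\setminus\lambda$ with $\alpha h$ having coefficients in $\mathcal{O}_{f}$, to which Sturm literally applies, and $\alpha\notin\lambda$ lets one divide it back out. So you should either insert this reduction or cite a formulation of Sturm's theorem proved directly for $\lambda$-integral (localized) coefficients; calling the step ``immediate since $h$ differs from $g$ only by a scalar'' conflates holomorphy (which is indeed immediate) with integrality of the $q$-expansion (which is not). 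Your handling of the constant term $a_{0}(E)$ in the base case is fine, since there only one coefficient has a denominator and that denominator is a single rational integer prime to $\ell$.
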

\begin{proof}
Let us denote by $B$ the number $\frac{k(p+1)}{12}$ and by $E$ the form $E_{k}-p^{k-1}E_{k}^{(p)}$. Let $m$ denote the denominator of $-\frac{B_{k}}{2k}(1-p^{k-1})$. Observe that $\ell\nmid m$. Fourier coefficients of the form  $mE$ are rational integers. Coefficients of the form $f$ lie in $\mathcal{O}_{f}$. If $r=1$, then we know that for $n\leq B$
\[a_{n}(mf)\equiv a_{n}(mE)\,\mathop{mod}\,\lambda,\]
hence by the theorem of Sturm (cf. \cite[Theorem 9.18]{Stein_book}) we get the congruence for all $n\geq 0$. But $m\notin\lambda$, hence
\[a_{n}(f)\equiv a_{n}(E)\,\mathop{mod}\,\lambda\]
for all $n\geq 0$. If $r>1$, then we proceed by induction. Assume first that the statement is true for $r-1$. Suppose that $a_{n}(f)\equiv a_{n}(E)\,\mathop{mod}\,\lambda^{r}$ for $n\leq B$. In particular, we get $a_{n}(f)\equiv a_{n}(E)\,\mathop{mod}\,\lambda^{r-1}$ for all $n\geq 0$ by the induction hypothesis. Choose any algebraic integer $\pi\in\lambda\setminus\lambda^{2}$. Then the function $\frac{1}{\pi^{r-1}}(f-E)$ is a modular form in $\mathcal{M}_{k}(\Gamma_{0}(p))$ with Fourier coefficients lying in the localization $(\mathcal{O}_{f})_{\lambda}$ of the ring $\mathcal{O}_{f}$ at the prime ideal $\lambda$. By a theorem of Shimura (cf. \cite[Theorem 3.52]{Shimura_book}) the space $\mathcal{S}_{k}(\Gamma_{0}(p))$ has a basis consisting of forms with with Fourier coefficients in $\mathbb{Z}$. The same is true for $\mathcal{M}_{k}(\Gamma_{0}(p))$. Hence, there exists an algebraic integer $\alpha\in\mathcal{O}_{f}\setminus\lambda$ such that $\frac{\alpha}{\pi^{r-1}}(f-E)$ has the Fourier expansion in $\mathcal{O}_{f}$. Moreover, for all $n\leq B$ the congruence 
\[a_{n}\left(\frac{\alpha}{\pi^{r-1}}(f-E)\right)\equiv 0\,\mathop{mod}\,\lambda\]
holds. By the Sturm theorem, it is true for all $n\geq 0$. This implies
\[a_{n}(\alpha(f-E))\equiv 0\,\mathop{mod}\,\lambda^{r}\]
for all $n\geq 0$. Since $\alpha\notin\lambda$, the induction step holds true and the theorem is proved.
\end{proof}

\section{Sketch of the algorithm}
\subsection{Orders in number fields}\label{Subsection_orders}
Fix a rational prime $\ell$. We briefly recall the concept of an $\ell$-maximal order. We fix an algebraic integer $\theta$ and the number field $K=\mathbb{Q}(\theta)$. By $\mathcal{O}_{K}$ we denote the ring of algebraic integers in $K$. 

\medskip\noindent
An order $\mathcal{O}$ in $K$ is $\ell$-\textit{maximal} if $\ell\nmid[\mathcal{O}_{K}:\mathcal{O}]$. It is always possible to construct an $\ell$-maximal order $\mathcal{O}$ containing the equation order $\mathbb{Z}[\theta]$, cf. \cite[ Theorem 6.1.3]{Cohen}. 

\medskip\noindent
Every prime ideal $\mathfrak{L}$ in $\mathcal{O}$ lying over the prime $\ell$ is invertible in $\mathcal{O}$, which follows from \cite[Proposition 4.8.15]{Cohen}, \cite[Theorem 6.1.3]{Cohen} and \cite[Proposition 6.1.2]{Cohen}. 

\medskip\noindent
By \cite[Theorem 11.4]{Matsumura_book} the localization $\mathcal{O}_{\mathfrak{L}}$ of the ring $\mathcal{O}$ at the prime ideal $\mathfrak{L}$ is a discrete valuation ring which is equal to $(\mathcal{O}_{K})_{\mathfrak{L}\mathcal{O}_{K}}$ by \cite[Proposition 12.10]{Neukirch_book} (from which it also follows that $\mathfrak{L}\mathcal{O}_{K}$ is a unique prime ideal in $\mathcal{O}_{K}$ above $\mathfrak{L}$). 

\medskip\noindent
We define a valuation on elements of an $\ell$-maximal order with respect to any prime ideal over $\ell$. For a nonzero prime ideal $\mathfrak{L}\in\textrm{ Spec }\mathcal{O}$ over $\ell$, let $\mathcal{L} = \mathfrak{L}\mathcal{O}_{K}$ be the corresponding prime ideal in $\mathcal{O}_{K}$. Any element $x\in\mathcal{O}$ can be written as $x=u_{1}\pi^{r}=u_{2}\Pi^{r}$, for $u_{1}\in\mathcal{O}_{\mathfrak{L}}^{\times}, u_{2}\in(\mathcal{O}_{K})_{\mathcal{L}}^{\times}$ and uniformizers $\pi$ and $\Pi$ in $(\mathcal{O})_{\mathfrak{L}}$ and $(\mathcal{O}_{K})_{\mathcal{L}}$, respectively. 

\noindent
The common exponent of uniformizers will be denoted by $ord_{\mathfrak{L}}(x):=r$.
The definition extends to the fraction field $K=Frac(\mathcal{O})$ as $ord_{\mathfrak{L}}\left(\frac{x}{y}\right)=ord_{\mathfrak{L}}(x)-ord_{\mathfrak{L}}(y)$. The following equivalence holds for any $x\in\mathcal{O}\subset\mathcal{O}_{K}$
\[ord_{\mathfrak{L}}(x)\geq r\quad\Leftrightarrow\quad x\equiv 0\textrm{ mod } \mathcal{L}^{r}.\]

\medskip\noindent 
In the algorithm presented in Section \ref{subsection:Sketch_algorithm} we use the last equivalence of orders. It is also crucial for the algorithm that the computation of an $\ell$-maximal order is more efficient than computation of the whole ring of algebraic integers, which involves factorization of discriminants. By the result of Chistov (cf. \cite[Theorem 1.3]{Buchmann_Lenstra}), computation of the ring of algebraic numbers in the number field $K$ is polynomially equivalent to finding the largest square-free divisor of the field discriminant. The latter problem has not found to date a satisfactory solution, better than just factorizing the whole integer. On the other hand, computation of an $\ell$-maximal order is straightforward and quick (cf. \cite[Chapter 6]{Cohen}). Computation of prime ideals above $\ell$ in an $\ell$-maximal order is equally fast, cf. \cite[Chapter 6.2]{Cohen}. In our computations we often exploit this feature of $\ell$-maximal orders.

\subsection{Algorithm}\label{subsection:Sketch_algorithm}
\noindent
Input: $(p,k)\in\mathbb{Z}^2$, where $p$ is a prime number and $k\geq 2$ is an even integer.

\medskip\noindent
1. Compute Galois conjugacy classes of newforms in $\mathcal{S}_{k}(\Gamma_{0}(p))$. Call the set $New$.

\medskip\noindent
2. Compute the Sturm bound $B=\frac{k}{12}[SL_{2}(\mathbb{Z}):\Gamma_{0}(p)]=\frac{k}{12}\cdot(p+1)$.

\medskip\noindent
3. Compute the coefficients  $a_{n}(E_{k}-p^{k-1}E_{k}^{(p)})$ for $n\leq B$. 

\medskip\noindent
4. Compute the set of primes $L=\{\ell\textrm{ prime}:\ell\mid\textrm{Numerator}(-\frac{B_{k}}{2k}(1-p))\}$.

\medskip\noindent
5. For each pair $(\ell,f)\in L\times New$, compute $K_{f}$, i.e., the coefficient field of $f$. By $f$ we mean here a choice of a representative in Galois conjugacy class.

\medskip\noindent
6. Find an algebraic integer $\theta$ such that $K_{f}=\mathbb{Q}(\theta)$. 

\medskip\noindent
7. Compute an $\ell$-maximal order $\mathcal{O}$ above $\mathbb{Z}[\theta]$.

\medskip\noindent
8. Compute the set $\mathcal{S}=\{\lambda\in\textrm{ Spec }\mathcal{O}:\lambda\cap\mathbb{Z}=\ell\mathbb{Z}\}$.

\medskip\noindent
9. For each $\lambda\in\mathcal{S}$ compute
\[r_{\lambda}=\min_{n\leq B}(ord_{\lambda}(a_{n}(f)-a_{n}(E_{k}-p^{k-1}E_{k}^{(p)}))).\]

\medskip\noindent
\textbf{Output:} If $r_{\lambda}>0$ then we have a congruence
\[a_{n}(f)\equiv a_{n}(E_{k}-p^{k-1}E_{k}^{(p)})\textrm{ mod }(\lambda\mathcal{O}_{f})^{r_{\lambda}}\]
for all $n\geq 0$.

\section{Numerical data}\label{Section_numerical}
In this section we discuss numerical results which were gathered by a repeated use of the algorithm in Section \ref{subsection:Sketch_algorithm}. The levels and ranges we have examined are summarized in Table \ref{tab:Levels}.
\begin{table}[htbp]
  \centering
  \caption{Range of computations}
\[\begin{array}{|c|c|c|c|c|c|c|c|c|c|c|c|}
    \hline
    k              & 2     & 4 & 6    & 8  & 10 & 12  & 14 & 16 & 18 & 20 & 22 \\
    \hline
    p\leq & 3001 & 919 & 251 & 193 & 109 & 113 & 97 & 73 & 71 & 71 & 59\\
    \hline
\end{array}\]
  \label{tab:Levels}%
\end{table}%
\noindent
In total, we found 765 congruences of the form (\ref{congruence}) for the ranges and weights described above. There are 73 congruences such that $r>1$. We found 115 congruences such that $\lambda$ is ramified, i.e. $ord_{\lambda}(\ell) >1 $. Only 7 among them have the property that $r>1$. For large levels, when the degree of the coefficient field $K_{f}$ of a newform $f$ was bigger than $150$ or the prime ideal $\lambda$ contained $2$, we have skipped the computations. However, we did a complete search in the range described in Table \ref{tab:Levels_full}.
\begin{table}[htbp]
  \centering
  \caption{Levels and weights which were completely investigated}
\[\begin{array}{|c|c|c|c|c|c|c|c|c|c|c|c|}
    \hline
    k          & 2     & 4 & 6    & 8  & 10 & 12  & 14 & 16 & 18 & 20 & 22 \\
    \hline
    p\leq & 1789 & 397 & 229 & 193 & 109 & 113 & 97 & 71 & 67 & 67 & 59 \\
    \hline
\end{array}\]
  \label{tab:Levels_full}%
\end{table}%
\subsection{Description of the tables}
\noindent
We are interested in a congruence of the type
\[a_{n}(E)\equiv a_{n}(f)\textrm{ mod }\lambda^{r}\]
for all $n\geq 0$, between the Eisenstein series $E=E_{k}-p^{k-1}E_{k}^{(p)}\in\mathcal{E}_{k}(\Gamma_{0}(p))$ and the newform $f\in\mathcal{S}_{k}(\Gamma_{0}(p))$ for different weights $k$ and prime levels $p$. 

\medskip\noindent
Our numerical results are summarized in Table \ref{tab:UnramifiedHigherPowers}, Table \ref{tab:RamifiedHighPowers} and Table \ref{tab:RamifiedSmallPowers} presented below. Complete data set is available online, cf.\cite{Bartek}. Each table contains $8$ columns.
\begin{table}[h]
  \centering
  \caption{Sample data entry}
    \begin{tabular}{|c|c|c|c|c|c|c|c|}
    \hline
    p     & k     & $\ell$     & r   & m & i     & d \\
    \hline
    769   & 2     & 2     & 5     & 5     & 2     & 36  \\
    \hline
\end{tabular}
  \label{tab:sample_data_entry}%
\end{table}
The letters $p$, $k$ and $\ell$ were already explained. Denote by $d$ the degree of the number field $K_{f}$ generated by the coefficients of the form $f$ and $\lambda$ is a prime ideal in the ring of integers of $K_{f}$, above the rational prime $\ell\in\mathbb{Z}$. The letter $e$ denotes the ramification index of the ideal $\lambda$ at $\ell$ and $m=ord_{\lambda}(\mu(E))$. The column labeled by $i$ contains the number of the Galois orbit of representing newform $f$ (with respect to the internal MAGMA numbering). 

\medskip\noindent
In Table \ref{tab:UnramifiedHigherPowers} we present data concerning congruences for which $ord_{\lambda}(\ell)=1$. In Table $\ref{tab:RamifiedHighPowers}$ we present cases where $ord_{\lambda}(\ell) >1$ and $ord_{\lambda}(\mu(E))>ord_{\lambda}(\ell)$. From Table $\ref{tab:UnramifiedHigherPowers}$ one can read off many properties of the congruences satisfying $ord_{\lambda}(\ell)=1$. For  $1<r\leq ord_{\lambda}(\mu(E))$ we found only 5 congruences that do not satisfy $r = ord_{\lambda}(\mu(E))$ and 56 that satisfy this condition. Observe that the exponent was not maximal only for $k=2$. In Table \ref{tab:RamifiedHighPowers} we collect data about all congruences for which $ord_{\lambda}(\ell)>1$ and $ord_{\lambda}(\mu(E))>ord_{\lambda}(\ell)$. For primes $\ell\geq 3$ we found only $5$. The cases when $ord_{\lambda}(\mu(E))$ equals $ord_{\lambda}(\ell)$ are presented in Table \ref{tab:RamifiedSmallPowers}.

\begin{table}[htbp]
  \centering
  \caption{Congruences with exponent greater than one and without ramification}
\begin{tabular}{ccccc}
    \begin{tabular}{|c|c|c|c|c|c|c|c|}
    \hline
    p     & k     & $\ell$     & r   & m & i     & d \\
    \hline
    769   & 2     & 2     & 5     & 5     & 2     & 36  \\
    \hline
    1459  & 2     & 3     & 5     & 5     & 3     & 71  \\
    \hline
    257   & 4     & 2     & 4     & 4     & 1     & 28  \\
    \hline
    641   & 2     & 2     & 4     & 4     & 2     & 33  \\
    \hline
    1409  & 2     & 2     & 4     & 4     & 3     & 65  \\
    \hline
    163   & 2     & 3     & 3     & 3     & 3     & 7   \\
    \hline
    163   & 4     & 3     & 3     & 3     & 1     & 19  \\
    \hline
    163   & 8     & 3     & 3     & 3     & 1     & 46  \\
    \hline
    193   & 2     & 2     & 3     & 3     & 3     & 8   \\
    \hline
    193   & 6     & 2     & 3     & 3     & 2     & 41  \\
    \hline
    251   & 2     & 5     & 3     & 3     & 2     & 17  \\
    \hline
    449   & 2     & 2     & 3     & 3     & 2     & 23  \\
    \hline
    487   & 2     & 3     & 3     & 4     & 4     & 16  \\
    \hline
    577   & 2     & 2     & 3     & 3     & 4     & 18  \\
    \hline
    811   & 2     & 3     & 3     & 3     & 3     & 40  \\
    \hline
    1373  & 2     & 7     & 3     & 3     & 3     & 60  \\
    \hline
    1601  & 2     & 2     & 3     & 3     & 2     & 80  \\
    \hline
    1783  & 2     & 3     & 3     & 3     & 2     & 82  \\
    \hline
    97    & 2     & 2     & 2     & 2     & 2     & 4   \\
    \hline
    97    & 6     & 2     & 2     & 2     & 2     & 21  \\
    \hline
    97    & 10    & 2     & 2     & 2     & 2     & 37  \\
    \hline
    \end{tabular} & &

    \begin{tabular}{|c|c|c|c|c|c|c|c|}
    \hline
    p     & k     & $\ell$     & r   & m & i     & d  \\
    \hline
    101   & 2     & 5     & 2     & 2     & 2     & 7   \\
    \hline
    101   & 6     & 5     & 2     & 2     & 2     & 24  \\
    \hline
    101   & 10    & 5     & 2     & 2     & 2     & 41  \\
    \hline
    109   & 2     & 3     & 2     & 2     & 3     & 4   \\
    \hline
    109   & 4     & 3     & 2     & 2     & 1     & 12  \\
    \hline
    109   & 8     & 3     & 2     & 2     & 1     & 30  \\
    \hline
    109   & 10    & 3     & 2     & 2     & 2     & 42  \\
    \hline
    151   & 2     & 5     & 2     & 2     & 3     & 6   \\
    \hline
    151   & 6     & 5     & 2     & 2     & 2     & 35  \\
    \hline
    163   & 6     & 3     & 2     & 2     & 2     & 35  \\
    \hline
    193   & 4     & 2     & 2     & 2     & 1     & 23  \\
    \hline
    197   & 2     & 7     & 2     & 2     & 3     & 10  \\
    \hline
    197   & 4     & 7     & 2     & 2     & 1     & 22  \\
    \hline
    251   & 4     & 5     & 2     & 2     & 1     & 24  \\
    \hline
    379   & 2     & 3     & 2     & 2     & 2     & 18  \\
    \hline
    379   & 4     & 3     & 2     & 2     & 1     & 44  \\
    \hline
    433   & 2     & 3     & 2     & 2     & 4     & 16  \\
    \hline
    491   & 2     & 7     & 2     & 2     & 3     & 29  \\
    \hline
    601   & 2     & 5     & 2     & 2     & 2     & 29  \\
    \hline
    673   & 2     & 2     & 2     & 2     & 3     & 24  \\
    \hline
    677   & 2     & 13    & 2     & 2     & 4     & 35  \\
    \hline
    \end{tabular} & &

    \begin{tabular}{|c|c|c|c|c|c|c|c|}
    \hline
    p     & k     & $\ell$     & r   & m & i     & d  \\
    \hline
    727   & 2     & 11    & 2     & 2     & 2     & 36  \\
    \hline
    751   & 2     & 5     & 2     & 3     & 2     & 38  \\
    \hline
    757   & 2     & 3     & 2     & 2     & 2     & 33  \\
    \hline
    883   & 2     & 7     & 2     & 2     & 2     & 39  \\
    \hline
    929   & 2     & 2     & 2     & 2     & 3     & 47  \\
    \hline
    1051  & 2     & 5     & 2     & 2     & 3     & 48  \\
    \hline
    1151  & 2     & 5     & 2     & 2     & 3     & 68  \\
    \hline
    1153  & 2     & 2     & 2     & 4     & 3     & 50  \\
    \hline
    1201  & 2     & 5     & 2     & 2     & 3     & 51  \\
    \hline
    1217  & 2     & 2     & 2     & 3     & 2     & 58  \\
    \hline
    1301  & 2     & 5     & 2     & 2     & 3     & 66  \\
    \hline
    1451  & 2     & 5     & 2     & 2     & 2     & 73  \\
    \hline
    1453  & 2     & 11    & 2     & 2     & 2     & 63  \\
    \hline
    1471  & 2     & 7     & 2     & 2     & 2     & 72  \\
    \hline
    1567  & 2     & 3     & 2     & 2     & 4     & 69  \\
    \hline
    1601  & 2     & 5     & 2     & 2     & 2     & 80  \\
    \hline
    1621  & 2     & 3     & 2     & 3     & 3     & 70  \\
    \hline
    1667  & 2     & 7     & 2     & 2     & 2     & 82  \\
    \hline
    1697  & 2     & 2     & 2     & 2     & 2     & 77  \\
    \hline
    \end{tabular}
    \end{tabular}%
  \label{tab:UnramifiedHigherPowers}%
\end{table}%

\begin{table}[htbp]
  \centering
  \caption{Congruences with $m>e$ and with ramification}
\begin{tabular}{ccccc}
    \begin{tabular}{|c|c|c|c|c|c|c|c|}
    \hline
    p     & k     & $\ell$     & r   & m  & e   & i     & d \\
    \hline
    \bf{3001} & \bf{2}     & \bf{5}     & \bf{1}     & \bf{6}     & \bf{2}     & \bf{1}     & \bf{2} \\
    \hline
    \bf{3001} & \bf{2}     & \bf{5}     & \bf{1}     & \bf{9}     & \bf{3}     & \bf{3}     & \bf{132} \\
    \hline
    \bf{251}   & \bf{6}     & \bf{5}     & \bf{1}     & \bf{6}     & \bf{2}     & \bf{2}     & \bf{59}  \\
    \hline
    \bf{919}   & \bf{2}     & \bf{3}     & \bf{2}     & \bf{4}     & \bf{2}     & \bf{3}     & \bf{47}  \\
    \hline
    \bf{919}   & \bf{4}     & \bf{3}     & \bf{2}     & \bf{4}     & \bf{2}     & \bf{1}     & \bf{105}  \\
    \hline
    257   & 2     & 2     & 1     & 25    & 5     & 2     & 14  \\
    \hline
    257   & 2     & 2     & 5     & 10    & 2     & 2     & 14  \\
    \hline
    353   & 2     & 2     & 1     & 10    & 5     & 4     & 14  \\
    \hline
\end{tabular} & &

    \begin{tabular}{|c|c|c|c|c|c|c|c|}
    \hline
    p     & k     & $\ell$     & r   & m  & e   & i     & d \\
    \hline
    577   & 2     & 2     & 1     & 6     & 2     & 4     & 18  \\
    \hline
    1153  & 2     & 2     & 1     & 16    & 4     & 3     & 50  \\
    \hline
    1249  & 2     & 2     & 1     & 26    & 13    & 3     & 59  \\
    \hline
    1601  & 2     & 2     & 1     & 6     & 2     & 2     & 80  \\
    \hline
    1217  & 2     & 2     & 1     & 39    & 13    & 2     & 58  \\
    \hline
    1889  & 2     & 2     & 1     & 4     & 2     & 3     & 96  \\
    \hline
    2113  & 2     & 2     & 1    & 15     & 5     & 2     & 91  \\
    \hline
    2273  & 2     & 2     & 1    & 10     & 5     & 3     & 105 \\
    \hline
    \end{tabular} & &

    \begin{tabular}{|c|c|c|c|c|c|c|c|}
    \hline
    p     & k     & $\ell$     & r   & m  & e   & i     & d \\
    \hline
    257   & 4     & 2     & 1     & 12    & 3     & 1     & 28  \\
    \hline
    257   & 4     & 2     & 1     & 16    & 4     & 2     & 36  \\
    \hline
    257   & 4     & 2     & 1     & 20    & 5     & 1     & 28  \\
    \hline
    257   & 4     & 2     & 1     & 20    & 5     & 2     & 36  \\
    \hline
    257   & 4     & 2     & 1     & 20    & 5     & 2     & 36  \\
    \hline
    257   & 4     & 2     & 1     & 8     & 2     & 1     & 28  \\
    \hline
    257   & 4     & 2     & 5     & 8     & 2     & 1     & 28  \\
    \hline
    \end{tabular}

\end{tabular}
  \label{tab:RamifiedHighPowers}%
\end{table}%

\begin{table}[htbp]
  \centering
  \caption{Congruences with $m=e$ and with ramification}
\begin{tabular}{ccccc}
    \begin{tabular}{|c|c|c|c|c|c|c|c|}
    \hline
    p     & k     & $\ell$     & r     & m     & e     & i     & d  \\
    \hline
    31    & 2     & 5     & 1     & 2     & 2     & 1     & 2   \\
    \hline
    31    & 6     & 5     & 1     & 2     & 2     & 2     & 8   \\
    \hline
    31    & 10    & 5     & 1     & 2     & 2     & 2     & 13  \\
    \hline
    31    & 14    & 5     & 1     & 2     & 2     & 2     & 18  \\
    \hline
    31    & 18    & 5     & 1     & 2     & 2     & 2     & 23  \\
    \hline
    31    & 22    & 5     & 1     & 2     & 2     & 2     & 28  \\
    \hline
    47    & 10    & 23    & 1     & 2     & 2     & 2     & 20  \\
    \hline
    47    & 12    & 23    & 1     & 2     & 2     & 1     & 18  \\
    \hline
    47    & 16    & 23    & 1     & 2     & 2     & 1     & 26  \\
    \hline
    47    & 20    & 23    & 1     & 2     & 2     & 1     & 34  \\
    \hline
    53    & 6     & 13    & 1     & 2     & 2     & 2     & 12  \\
    \hline
    53    & 18    & 13    & 1     & 2     & 2     & 2     & 38  \\
    \hline
    67    & 4     & 11    & 1     & 2     & 2     & 1     & 7   \\
    \hline
    67    & 14    & 11    & 1     & 2     & 2     & 2     & 37  \\
    \hline
    103   & 2     & 17    & 1     & 2     & 2     & 2     & 6   \\
    \hline
    113   & 2     & 2     & 1     & 2     & 2     & 2     & 2   \\
    \hline
    113   & 6     & 2     & 1     & 2     & 2     & 1     & 21  \\
    \hline
    113   & 6     & 2     & 1     & 2     & 2     & 1     & 21  \\
    \hline
    113   & 6     & 2     & 1     & 2     & 2     & 2     & 25  \\
    \hline
    113   & 6     & 2     & 1     & 2     & 2     & 2     & 25  \\
    \hline
    127   & 2     & 7     & 1     & 2     & 2     & 2     & 7   \\
    \hline
    127   & 8     & 7     & 1     & 2     & 2     & 1     & 34  \\
    \hline
    131   & 2     & 5     & 1     & 2     & 2     & 2     & 10  \\
    \hline
    131   & 6     & 5     & 1     & 2     & 2     & 2     & 32  \\
    \hline
    191   & 6     & 5     & 1     & 2     & 2     & 2     & 46  \\
    \hline
    199   & 2     & 3     & 1     & 2     & 2     & 3     & 10  \\
    \hline
    199   & 4     & 3     & 1     & 2     & 2     & 1     & 20  \\
    \hline
    211   & 2     & 5     & 1     & 2     & 2     & 1     & 2   \\
    \hline
    211   & 6     & 5     & 1     & 2     & 2     & 2     & 47  \\
    \hline
    223   & 4     & 37    & 1     & 2     & 2     & 1     & 24  \\
    \hline
    \end{tabular} & &

   \begin{tabular}{|c|c|c|c|c|c|c|c|}
    \hline
    p     & k     & $\ell$     & r     & m     & e     & i     & d  \\
    \hline
    281   & 2     & 5     & 1     & 2     & 2     & 2     & 16  \\
    \hline
    307   & 4     & 3     & 1     & 2     & 2     & 1     & 35  \\
    \hline
    337   & 2     & 2     & 1     & 2     & 2     & 2     & 15  \\
    \hline
    337   & 4     & 7     & 1     & 2     & 2     & 1     & 40  \\
    \hline
    353   & 4     & 2     & 1     & 2     & 2     & 2     & 48  \\
    \hline
    353   & 4     & 11    & 1     & 2     & 2     & 1     & 40  \\
    \hline
    367   & 4     & 61    & 1     & 2     & 2     & 1     & 41  \\
    \hline
    401   & 4     & 5     & 1     & 2     & 2     & 1     & 45  \\
    \hline
    409   & 2     & 17    & 1     & 2     & 2     & 2     & 20  \\
    \hline
    409   & 4     & 17    & 1     & 2     & 2     & 1     & 47  \\
    \hline
    419   & 4     & 19    & 1     & 2     & 2     & 1     & 43  \\
    \hline
    523   & 2     & 3     & 1     & 2     & 2     & 3     & 26  \\
    \hline
    541   & 2     & 5     & 1     & 2     & 2     & 2     & 24  \\
    \hline
    571   & 2     & 5     & 1     & 2     & 2     & 9     & 18  \\
    \hline
    593   & 2     & 2     & 1     & 2     & 2     & 5     & 27  \\
    \hline
    661   & 2     & 11    & 1     & 2     & 2     & 3     & 29  \\
    \hline
    683   & 2     & 11    & 1     & 2     & 2     & 3     & 31  \\
    \hline
    691   & 2     & 5     & 1     & 2     & 2     & 2     & 33  \\
    \hline
    733   & 2     & 61    & 1     & 2     & 2     & 4     & 32  \\
    \hline
    761   & 2     & 5     & 1     & 2     & 2     & 3     & 41  \\
    \hline
    761   & 2     & 19    & 1     & 2     & 2     & 3     & 41  \\
    \hline
    881   & 2     & 2     & 1     & 2     & 2     & 2     & 46  \\
    \hline
    911   & 2     & 7     & 1     & 2     & 2     & 3     & 53  \\
    \hline
    941   & 2     & 5     & 1     & 2     & 2     & 2     & 50  \\
    \hline
    971   & 2     & 5     & 1     & 2     & 2     & 2     & 55  \\
    \hline
    1021  & 2     & 17    & 1     & 2     & 2     & 2     & 47  \\
    \hline
    1091  & 2     & 5     & 1     & 2     & 2     & 3     & 62  \\
    \hline
    1201  & 2     & 2     & 1     & 2     & 2     & 1     & 2   \\
    \hline
    1279  & 2     & 3     & 1     & 2     & 2     & 2     & 64  \\
    \hline
    1289  & 2     & 7     & 1     & 2     & 2     & 4     & 61  \\
    \hline
    \end{tabular} & &

   \begin{tabular}{|c|c|c|c|c|c|c|c|}
    \hline
    p     & k     & $\ell$     & r     & m     & e     & i     & d  \\
    \hline
    1291  & 2     & 5     & 1     & 2     & 2     & 2     & 62  \\
    \hline
    1381  & 2     & 5     & 1     & 2     & 2     & 2     & 63  \\
    \hline
    1447  & 2     & 241   & 1     & 2     & 2     & 2     & 71  \\
    \hline
    1471  & 2     & 5     & 1     & 2     & 2     & 2     & 72  \\
    \hline
    1483  & 2     & 13    & 1     & 2     & 2     & 4     & 67  \\
    \hline
    1511  & 2     & 5     & 1     & 2     & 2     & 2     & 87  \\
    \hline
    1531  & 2     & 3     & 1     & 2     & 2     & 4     & 73  \\
    \hline
    1531  & 2     & 5     & 1     & 2     & 2     & 4     & 73  \\
    \hline
    1553  & 2     & 2     & 1     & 2     & 2     & 2     & 74  \\
    \hline
    1693  & 2     & 3     & 1     & 2     & 2     & 3     & 72  \\
    \hline
    1697  & 2     & 53    & 1     & 2     & 2     & 2     & 77  \\
    \hline
    1777  & 2     & 2     & 1     & 2     & 2     & 2     & 79  \\
    \hline
    1789  & 2     & 149   & 1     & 2     & 2     & 2     & 80  \\
    \hline
    101   & 4     & 5     & 1     & 3     & 3     & 1     & 9   \\
    \hline
    101   & 8     & 5     & 1     & 3     & 3     & 1     & 26  \\
    \hline
    101   & 12    & 5     & 1     & 3     & 3     & 1     & 42  \\
    \hline
    181   & 2     & 5     & 1     & 3     & 3     & 2     & 9   \\
    \hline
    181   & 6     & 5     & 1     & 3     & 3     & 2     & 40  \\
    \hline
    353   & 4     & 2     & 1     & 3     & 3     & 1     & 40  \\
    \hline
    1321  & 2     & 11    & 1     & 3     & 3     & 4     & 56  \\
    \hline
    1381  & 2     & 23    & 1     & 3     & 3     & 2     & 63  \\
    \hline
    1571  & 2     & 5     & 1     & 3     & 3     & 2     & 82  \\
    \hline
    1747  & 2     & 3     & 1     & 3     & 3     & 3     & 77  \\
    \hline
    1201  & 2     & 2     & 1     & 5     & 5     & 3     & 51  \\
    \hline
    353   & 4     & 2     & 1     & 5     & 5     & 1     & 40  \\
    \hline
    353   & 4     & 2     & 1     & 5     & 5     & 2     & 48  \\
    \hline
    353   & 4     & 2     & 1     & 5     & 5     & 2     & 48  \\
    \hline
    353   & 4     & 2     & 2     & 2     & 2     & 1     & 40  \\
    \hline
    43    & 8     & 7     & 2     & 2     & 2     & 1     & 11  \\
    \hline
    43    & 20    & 7     & 2     & 2     & 2     & 1     & 32  \\
    \hline
    \end{tabular}

\end{tabular}
\label{tab:RamifiedSmallPowers}%
\end{table}%

\begin{remark}
The main difficulty in enlarging the number of congruences in Table \ref{tab:RamifiedHighPowers} lies in the fact that $\dim\mathcal{S}_{k}(\Gamma_{0}(p))=O(k\cdot p)$ as $k\rightarrow\infty$ and $p\rightarrow\infty$. In a typical situation, when $f\in\mathcal{S}_{k}(\Gamma_{0}(p))$ is a newform, the degree $[K_{f}:\mathbb{Q}]$ is roughly of the size $\frac{1}{2}\dim\mathcal{S}_{k}(\Gamma_{0}(p))$. To check the congruence, we perform Step 9 of the algorithm in Section \ref{subsection:Sketch_algorithm}. We have to compute $\frac{k}{12}(p+1)$ coefficients of the newform $f$ and this is usually the slowest part of the algorithm. For example, when $k=2$ and $p>3000$, this means that we work with the field $K_{f}$ of degree at least 150 over $\mathbb{Q}$. In the range described in Table \ref{tab:Levels_full} we found all possible congruences such that $ord_{\lambda}(\ell)>1$. For levels and weights described in Table \ref{tab:Levels}, we have decided to look only for congruences such that $[K_{f}:\mathbb{Q}]<150$. This condition guarantees that Step $9$ of the algorithm can be executed in less than $48$ hours of computational time on the computer with Intel i5, 2.53 GHz processor and 4GB RAM.  
\end{remark}


\subsection{The case $r=m>1$ and $e=1$}\label{subsection:r_eq_m_gt_1_case}
Let $k=2$ and $p=109$. In this example we choose any root $\alpha\in\overline{\mathbb{Q}}$ of the equation
\[\alpha^4+\alpha^3-5\alpha^2-4\alpha+3=0\]
and form $K=\mathbb{Q}(\alpha)$. We have the Galois conjugacy class of newforms with the $q$-expansion
\[f=q+\alpha q^2+(1+4\alpha-\alpha^3)q^3+(\alpha^2-2)q^4-\alpha q^5+\ldots.\]
The ring of integers $\mathcal{O}_{f}$ of $K_{f}=K$ is equal to $\mathbb{Z}[\alpha]$ and
\[(3)=(3,\alpha)(3,2+\alpha+\alpha^2+\alpha^3)\]
is the factorization into prime ideals in $\mathcal{O}_{f}$. 
We find, by the algorithm, that for $\lambda=(3,\alpha)$
\[a_{n}(f)\equiv a_{n}(E_{2}-109E_{2}^{(109)})\textrm{ mod }\lambda^2\]
for all natural $n\geq 0$. In fact, this is the maximal possible exponent, since $\mu(E_{2}-109E_{2}^{(109)})=\frac{9}{2}$ and $\textrm{ord}_{\lambda}(9)=2$. In the unramified case, the upper bound for the maximal exponent $r$ is smaller or equal to the one described in Corollary \ref{corollary:bounds}. This example shows that the equality can occur.

\medskip\noindent
If $p=163$ we obtain four different congruences for weights $k=2,4,6$ and $8$ with ideals above $3$ raised to the powers $3,3,2$ and $3$ respectively. For weights $k=2,4$ or $8$ the exponent of the ideal is maximal possible (cf. Table \ref{tab:UnramifiedHigherPowers}). For $k=2$ we find a number field of degree $7$ over $\mathbb{Q}$ with a primitive element $\alpha$ with a minimal polynomial
\[6+4 \alpha -23 \alpha ^2+19 \alpha ^4-5 \alpha ^5-3 \alpha ^6+\alpha ^7=0.\]
The ring of integers is equal to $\mathbb{Z}[\alpha]$. Its discriminant is equal to $2\cdot 82536739$ and 
\[3\mathbb{Z}[\alpha]=(3,\alpha)(3,1+\alpha +\alpha ^3+\alpha ^4+\alpha ^6).\]
We find a newform of level $163$ and weight $2$ with $q$-expansion
\begin{eqnarray*}
f=q + \alpha q^2 + (-2 + 5 \alpha + 5 \alpha^2 - 6 \alpha^3 - \alpha^4 + \alpha^5) q^3 \\
+ (-2 +\alpha^2) q^4 + (6 + 6 \alpha - 11 \alpha^2 - 6 \alpha^3 + 7 \alpha^4 + \alpha^5 - \alpha^6) q^5+\ldots
\end{eqnarray*}
It is congruent to the Eisenstein series
\[E_{2}-163E_{2}^{(163)}=\frac{27}{4}+\sum_{n=1}^{\infty}\sigma_{1}(n)q^n-163\sum_{n=1}^{\infty}\sigma_{1}(n)q^{163n}\]
modulo $(3,\alpha)^3$.

\subsection{The case $m>r>1$ and $e=1$}\label{subsection:m_gt_r_gt_1_e_eq_1_case}
Let $k=2$ and $p=487$. The space of cusp forms $\mathcal{S}_{2}(\Gamma_{0}(487))$ contains five Galois conjugacy classes of newforms. We take $f$ such that
\[f=q+\alpha q+\cdots,\]
where $\alpha$ is an algebraic integer such that
\begin{dmath*}
\alpha^{16} - 7 \alpha^{15} - 5 \alpha^{14} + 131 \alpha^{13} - 132 \alpha^{12} - 977 \alpha^{11} + 1666 \alpha^{10} + 3671 \alpha^9 - 8191 \alpha^8 - 7212 \alpha^7 + 20571 \alpha^6 + 6937 \alpha^5 - 27100 \alpha^4 - 2748 \alpha^3 + 17207 \alpha^2 + 360 \alpha - 3825 =0.
\end{dmath*}
The field $K_{f}$ is equal to $\mathbb{Q}(\alpha)$ and the ideal $3\mathcal{O}_{K}$ is a product of four distinct prime ideals
\[3\mathcal{O}_{K}=\lambda_{1}\lambda_{2}\lambda_{3}\lambda_{4}.\]
Let $\lambda_{1}=(3,\frac{1}{105}\beta)$, where
\begin{dmath*}
\beta=2 \alpha^{15} + 106 \alpha^{14} + 50 \alpha^{13} + 112 \alpha^{12} + 156 \alpha^{11} + 161 \alpha^{10} + 392 \alpha^9 + 307 \alpha^8 + 148 \alpha^7 + 126 \alpha^6 + 192 \alpha^5 + 194 \alpha^4 + 280 \alpha^3 + 279 \alpha^2 + 124 \alpha + 705.
\end{dmath*}
We check that $f$ is congruent to $E_{2}-487E_{2}^{(487)}$ modulo $\lambda_{1}^{3}$ and $m=\mu(E_{2}-487E_{2}^{(487)})=4$ (cf. Corollary \ref{corollary:bounds}). Moreover, there is no congruence modulo $\lambda_{1}^{4}$, hence the maximal exponent $r=3$ is smaller than the theoretical upper bound $m$ in Corollary \ref{corollary:bounds}. 



\subsection{The case $r<e<m$ and $e>1$}\label{subsection:m_gt_e_gt_r_e_gt_1_case}
Let $k=2$ and $p=3001$. The space of cusp forms $\mathcal{S}_{2}(\Gamma_{0}(3001))$ has dimension $249$ and it is a direct sum of three subspaces $S_{1}$, $S_{2}$ and $S_{3}$ of dimensions $2$, $115$ and $132$, respectively. The space $S_{1}$ is generated by two Galois conjugate newforms
\[f_{1}=q + \alpha_{1} q^2 + (\alpha_{1} + 1)q^3 + (\alpha_{1} - 1)q^4 + 2\alpha_{1} q^5 + (2\alpha_{1} + 1)q^6+\cdots,\]
\[f_{2}=q + \alpha_{2} q^2 + (\alpha_{2} + 1)q^3 + (\alpha_{2} - 1)q^4 + 2\alpha_{2} q^5 + (2\alpha_{2} + 1)q^6+\cdots,\]
where $\alpha_{1}$ and $\alpha_{2}$ are roots of the polynomial $x^2-x-1$. Since the forms are Galois conjugate, we will consider only one of them. Assume $\alpha_{1}=\frac{1+\sqrt{5}}{2}$. The ring of integers of $K_{f_{1}}=\mathbb{Q}(\sqrt{5})$ is $\mathcal{O}_{f_{1}}=\mathbb{Z}[\frac{1+\sqrt{5}}{2}]$ and
\[5\mathcal{O}_{f_{1}}=\lambda^{2},\]
for the prime ideal $\lambda$ which equals $(5,2+\frac{1+\sqrt{5}}{2})$. We check that $a_{0}(E_{2}-3001E_{2}^{(3001)})=125$ and $\mu(E_{2}-3001E_{2}^{(3001)})=125$, and $ord_{\lambda}(125)=6$. Corollary \ref{corollary:bounds} shows that the upper bound for the exponent $r$ of the congruence is $6$. We checked by MAGMA that for $n\leq \frac{3001+1}{12}$ the congruence
\[a_{n}(f_{1})\equiv a_{n}(E_{2}-3001E_{2}^{(3001)})\textrm{ mod }\lambda\]
holds. Hence, by Theorem \ref{Sturm_gen} the congruence holds for all $n\geq 0$. But we also find that $a_{2}(f_{1})-a_{2}(E_{2}-3001E_{2}^{(3001)})=\frac{1+\sqrt{5}}{2}-3\notin\lambda^{2}$, which proves that the maximal exponent $r$, for which the congruence holds is equal to $1$.

\subsection{The case $r=e<m$ and $e>1$}\label{subsection:m_gt_e_eq_r_e_gt_1_case}
Let $k=2$ and $p=919$. The space $S_{2}(\Gamma_{0}(919))$ contains 3 Galois conjugacy classes of newforms, with coefficient fields of degrees 2, 27 and 47, respectively. We take a representative $f$ of the class with the coefficient field of degree $47$ over $\mathbb{Q}$. The form $f$ equals $q+\alpha q+\ldots$, where the algebraic integer $\alpha$ is a root of a monic integral polynomial of degree 47. The discriminant of the field $K_{f}$ is approximately equal to $0.5995\cdot 10^{304}$. We were not able to compute the factorization of the discriminant. Instead, we work with a $3$-maximal order $\mathcal{O}$ above $\mathbb{Z}[\alpha]$, where $K_{f}=\mathbb{Q}(\alpha)$. There are $6$ different prime ideals above $3\mathcal{O}$ and $3\mathcal{O}=\lambda_{1}^{2}\cdot\prod_{i=2}^{6}\lambda_{i}$. We check that $f$ is congruent to $E_{2}-919E_{2}^{(919)}$ modulo $\lambda_{1}^2$, while the maximal exponent $m=ord_{\lambda_{1}}(\mu(E_{2}-919E_{2}^{(919)}))$ equals $4$. The discussion in Section \ref{Subsection_orders} implies that there is a congruence between $f$ and $E_{2}-919E_{2}^{(919)}$ modulo $(\lambda_{1}O_{f})^{2}$ and $ord_{\lambda_{1}O_{f}}(\mu(E_{2}-919E_{2}^{(919)}))=4$, hence we find that $r=e<m$ and $e>1$.

\subsection{Equation order is not always $\ell$-maximal}\label{subsection:Equation_order_is_not_always}
It is not always true that if we have a congruence modulo a power of a prime ideal above $\ell$ and $K_{f}=\mathbb{Q}(\theta)$, where $\theta$ is an algebraic integer, then an $\ell$-maximal order above $\mathbb{Z}[\theta]$ that we get from the algorithm implemented in MAGMA, is equal to the ring $\mathbb{Z}[\theta]$. We summarize several examples in Table \ref{tab:IndexHigh}. The prime $\ell$ is unramified in $K_{f}$. By $i$ we denote the number of the Galois orbit of the newform and by $ind$ the index $[\mathcal{O}:\mathbb{Z}[\theta]]$ for the $\ell$-maximal order above $\mathbb{Z}[\theta]$.
\begin{table}[htbp]
  \centering
  \caption{Index of the order}
    \begin{tabular}{|c|c|c|c|c|}
    \hline
    p     & k    & $\ell$   & i  & ind   \\
    \hline
    101   & 6    & 5      & 2  & 625 \\
    \hline
    751   & 2    & 5      & 2  & 625 \\
    \hline
    1621   & 2    & 3      & 3  & 3 \\
    \hline
    1667   & 2    & 7      & 2  & 343 \\
    \hline
 \end{tabular}
  \label{tab:IndexHigh}
\end{table}

\subsection{Large residue field}\label{subsection:large_residue_field}
Let $k=2$ and $p=401$. The space $\mathcal{S}_{2}(\Gamma_{0}(401))$ is a direct sum $S_{1}\oplus S_{2}$ of two new subspaces. The space $S_{1}$ is of dimension $12$ over $\mathbb{C}$ and it is generated by a newform and its Galois conjugates, none of which is congruent to the Eisenstein series $E_{2}-401E_{2}^{(401)}$ modulo any power of a prime ideal. The space $S_{2}$ is generated by a newform $f=q+\alpha q^2+\cdots$, such that $\alpha$ is an algebraic integer satisfying
\begin{dmath*}
-44+1058 \alpha-4111 \alpha^2-24699 \alpha^3+12831 \alpha^4+93934 \alpha^5-14353 \alpha^6-152221 \alpha^7+8292 \alpha^8+132085 \alpha^9-2749 \alpha^{10}-67876 \alpha^{11}+519 \alpha^{12}+21617 \alpha^{13}-51 \alpha^{14}-4305 \alpha^{15}+2 \alpha^{16}+521 \alpha^{17}-35 \alpha^{19}+\alpha^{21}=0.
\end{dmath*}
The field $K_{f}$ is generated by $\theta=\alpha$. The ideal $5\mathcal{O}_{f}$ is a product of four distinct prime ideals, i.e. every prime ideal above $5$ is unramified in $\mathcal{O}_{f}$. The prime ideal $\lambda=(5,\frac{1}{8}\beta)$ with 
\begin{dmath*}
\beta=32+48 \alpha+82 \alpha^2+75 \alpha^3+66 \alpha^4+70 \alpha^5+39 \alpha^6+62 \alpha^7+50 \alpha^8+37 \alpha^9+22 \alpha^{10}+56 \alpha^{11}+17 \alpha^{12}+2 \alpha^{13}+16 \alpha^{14}+26 \alpha^{15}+3 \alpha^{16}+7 \alpha^{17}+\alpha^{18}+\alpha^{19}
\end{dmath*} 
gives the quotient map $\pi:\mathcal{O}_{f}\rightarrow \mathcal{O}_{f}/\lambda\cong \mathbb{F}_{25}$. The image $\pi(\mathbb{Z}[\{a_{n}(f)\}_{n\in\mathbb{N}}\}])$ is a subfield $\mathbb{F}_{5}$ in $\mathcal{O}_{f}/\lambda$. Observe that $a_{0}(E_{2}-401E_{2}^{(401)})=\frac{50}{3}$ and $v_{5}(\mu(E_{2}-401E_{2}^{(401)}))=2$. One could expect that
\[a_{n}(f)\equiv a_{n}(E_{2}-401E_{2}^{(401)})\textrm{ mod }\lambda^{r}\]
holds for all $n\geq 0$ and $r\leq 2$. However, the coefficient $a_{2}(f)-a_{2}(E_{2}-401E_{2}^{(401)})$ is not congruent to $0$ modulo $\lambda^{2}$. The differences $a_{n}(f)- a_{n}(E_{2}-401E_{2}^{(401)})$ are congruent to $0$ modulo $\lambda$ for $n$ less or equal to the Sturm bound, hence, the congruence modulo $\lambda$ holds for all $n\geq 0$.
The order $\mathcal{O}=\mathbb{Z}[\{a_{n}(f)\}_{n\in\mathbb{N}}]$ equals also $\mathbb{Z}[\{a_{2}(f),a_{3}(f), a_{5}(f)\}]$ and can be obtained by performing the \textit{pMaximalOrder} algorithm in MAGMA, starting with $\mathbb{Z}[\theta]$. Let $\mathcal{O}^{(2)}$ denote a $2$-maximal order obtained from $\mathbb{Z}[\theta]$. Then, let $\mathcal{O}^{(2,3697)}$ be an $3697$-maximal order above $\mathcal{O}^{(2)}$ obtained by the MAGMA algorithm. Finally, let $\mathcal{O}^{(2,3697,34759357)}$ denote a $34759357$-maximal order above $\mathcal{O}^{(2,3697)}$. The order $\mathcal{O}^{(2,3697,34759357)}$ equals $\mathcal{O}$ and $[\mathcal{O}_{f}:\mathcal{O}]=5$. The maximal order $\mathcal{O}_{f}$ is the $5$-maximal order above $\mathcal{O}$. Note also that we have proper inclusions
\[\mathbb{Z}[\theta]\subsetneq\mathcal{O}^{(2)}\subsetneq\mathcal{O}^{(2,3697)}\subsetneq\mathcal{O}^{(2,3697,34759357)}.\]
In the algorithm in Section \ref{subsection:Sketch_algorithm} we use only a $5$-maximal order above $\mathbb{Z}[\theta]$. This is sufficient when we check the congruence modulo a prime above $5$ in $\mathcal{O}_{f}$. In order to compute $\mathcal{O}_{f}$, we should run \textit{pMaximalOrder} algorithm with primes $2,5,3697$ and $34759357$, respectively. Observe that in this particular case, the computation of $3697$ and $34759357$-maximal orders is completely unnecessary, because we check congruences modulo primes above $2$ and $5$ and $a_{0}(E_{2}-401E_{2}^{(401)})=\frac{50}{3}$. This shows that in Step 7 of the algorithm in Section \ref{subsection:Sketch_algorithm} we gain some significant amount of time by skipping the superfluous computation of $\mathcal{O}_{f}$.

\subsection{Congruences over $\mathbb{Q}$}\label{subsection:congruences_over_Q}
Let $f$ in $S_{2}(\Gamma_{0}(p))$ be a rational newform for which the congruence (\ref{congruence}) holds for all $n\geq 0$. In particular, $a_{q}(f)\equiv a_{q}(E)=q^{k-1}+1\textrm{ mod }\lambda^{r}$ for all primes $q\neq p$. Finding a rational newform $f$ as above amounts to a search for an elliptic curve $F$ (attached to $f$ by the modularity theorem) defined over $\mathbb{Q}$, of prime conductor $p$, such that
\[|\tilde{F}_{q}(\mathbb{F}_{q})|\equiv 0 \textrm{ mod }\ell^{r},\]
where $\tilde{F}_{q}$ denotes the reduction of $F$ at the prime $q$.
It follows from \cite[Theorem 2]{Katz}, that there exists an elliptic curve $F'$ over $\mathbb{Q}$ which is $\mathbb{Q}$-isogenous to $F$ and the group of $\mathbb{Q}$-rational points on $F'$ contains a point of order $\ell^{r}$. The conductor of $F'$ is $p$. For an elliptic curve defined over $\mathbb{Q}$ the smallest possible conductor is $11$. Hence $F'$ has good reduction at $2$. The group of $\mathbb{Q}$-rational points of $F'$ contains the torsion subgroup, which we denote by $T$. The reduction at 2 maps $T$ into $\tilde{F}'_{2}(\mathbb{F}_{2})$. The kernel of this homomorphism has order a power of 2. By Hasse theorem for elliptic curves the inequality $|\tilde{F}'_{2}(\mathbb{F}_{2})|\leq 5$ holds. Hence, the order $|T|$ equals $2^{m}$, $2^{m}\cdot 3$ or $2^{m}\cdot 5$, for some $m\geq 0$. Suppose that $|T|>2$. 

\medskip\noindent
If $|T|=2^{m}$, then \cite[Theorem 2]{Miyawaki} and \cite[Theorem 3]{Miyawaki} imply that $T\cong \mathbb{Z}/2\mathbb{Z}\oplus\mathbb{Z}/2\mathbb{Z}$ or $T\cong\mathbb{Z}/4\mathbb{Z}$ and in both cases $p=17$. The only $\mathbb{Q}$-isogeny class of elliptic curves of conductor $17$ is attached to the newform $f=q - q^2 - q^4 +\cdots$ in $\mathcal{S}_{2}(\Gamma_{0}(17))$. For any prime $q\neq p$ the coefficient $a_{q}(f)=1+q-|\tilde{F}'_{q}(\mathbb{F}_{q})|$ is congruent to $1+q$ modulo $4$. We check directly that $a_{p}(f)=1$. The congruence $a_{n}(f)\equiv a_{n}(E_{2}-17E_{2}^{(17)})\textrm{ mod }4$ holds for all $n\geq 1$. However, $a_{0}(E_{2}-17E_{2}^{(17)})=\frac{2}{3}$, so for all $n\geq 0$ we have only a congruence modulo $2$. 

\medskip\noindent
If $|T|=2^{m}\cdot 3$, then \cite[Theorem 1]{Miyawaki} implies that $T\cong\mathbb{Z}/3\mathbb{Z}$ and $p=19$ or $p=37$. There is exactly one $\mathbb{Q}$-isogeny class of elliptic curves of conductor 19. It provides the newform $f=q - 2q^3 - 2q^4+\cdots$ in $\mathcal{S}_{2}(\Gamma_{0}(19))$ congruent to $E_{2}-19E_{2}^{(19)}$ modulo 3 at all coefficients. If the conductor $p$ equals $37$, then there are two $\mathbb{Q}$-isogeny classes of elliptic curves. Only the class associated with the newform $f=q+q^3-2q^4+\cdots$ in $\mathcal{S}_{2}(\Gamma_{0}(37))$ provides the congruence $a_{n}(f)\equiv a_{n}(E_{2}-37E_{2}^{(37)})\textrm{ mod }3$ for all $n\geq 0$. The other newform $f'$ satisfies $a_{p}(f')=-1$, so the congruence cannot hold. 

\medskip\noindent
If $|T|=2^{m}\cdot 5$, then by \cite[Theorem 4]{Miyawaki} we get $T\cong\mathbb{Z}/5\mathbb{Z}$ and $p=11$. The unique newform $f\in\mathcal{S}_{2}(\Gamma_{0}(11))$ satisfies $a_{p}(f)=1$. Moreover, $a_{0}(E_{2}-11E_{2}^{(11)})=\frac{5}{12}$, hence the congruence $a_{n}(f)\equiv a_{n}(E_{2}-11E_{2}^{(11)})\textrm{ mod }5$ holds for all $n\geq 0$. 

\medskip\noindent
We are left with only one case, when $|T|$ equals $2$. If the elliptic curve $F'$ of prime conductor $p>17$ satisfies $|T|=2$, then $p=u^2+64$ for some rational number $u$. This is proved in \cite[Theorem 2]{Setzer}. The primes $p=113,353,593$ and $1153$ are of the form $u^2+64$ for $u\in\mathbb{Z}$ and on these levels we find newforms $f\in\mathcal{S}_{2}(\Gamma_{0}(p))$ congruent congruent to $E_{2}-pE_{2}^{(p)}$ modulo 2. However, in general it is not known whether the sequence $\{u^2+64\}_{u\in\mathbb{N}}$ contains an infinite number of primes.



\section*{Acknowledgments}
The author would like to thank Wojciech Gajda for many helpful suggestions and corrections. He thanks Gerhard Frey for reading an earlier version of the paper and for helpful comments and remarks. He would like to thank Gabor Wiese for his help in improving the paper and suggesting one of the lemmas. Finally, the author wishes to express his thanks to an anonymous referee for the careful reading of the paper and a detailed list of comments which improved the exposition and removed several inaccuracies. The author was supported by the National Science Centre research grant 2012/05/N/ST1/02871.

\bibliography{bibliography}
\bibliographystyle{amsplain}

\end{document}